\newtheorem{theorem}{Theorem}
\newtheorem{assumption}[theorem]{Assumption}
\newtheorem{lemma}[theorem]{Lemma}
\newtheorem{proposition}[theorem]{Proposition}
\newtheorem{remark}[theorem]{Remark}
\newtheorem{definition}[theorem]{Definition}
\newenvironment{proof}[1][Proof]{\textbf{#1.} }{\ \hspace*{\fill} \rule{0.5em}{0.5em}}
\newcommand{\note}[1]{\ifthenelse{\boolean{showcomments}} {\textcolor{red}{#1}}{}}
\begin{document}
\begin{frontmatter}
\title{Data-Driven Operator Theoretic Methods for Phase Space Learning and Analysis}

\author{Sai Pushpak Nandanoori*}\cortext[mycorrespondingauthor]{Corresponding author}
\ead{saipushpak.n@pnnl.gov} 

\author{Subhrajit Sinha} 
\ead{subhrajit.sinha@pnnl.gov}

\author{Enoch Yeung}
\ead{eyeung@ucsb.edu}

\fntext[fn1]{Sai Pushpak, Subhrajit Sinha are with the Pacific Northwest National Laboratory, Richland, WA 99354}
 
\fntext[fn2]{Enoch Yeung is with the Department of Mechanical Engineering, University of California Santa Barbara, CA 93106}





\begin{abstract}

This paper uses data-driven operator theoretic approaches to explore the global phase space of a dynamical system. We defined conditions for discovering new invariant subspaces in the state space of a dynamical system starting from an invariant subspace based on the spectral properties of the Koopman operator. When the system evolution is known locally in several invariant subspaces in the state space of a dynamical system, a phase space stitching result is derived that yields the global Koopman operator. Additionally, in the case of equivariant systems, a phase space stitching result is developed to identify the global Koopman operator using the symmetry properties between the invariant subspaces of the dynamical system and time-series data from any one of the invariant subspaces. 
Finally, these results are extended to topologically conjugate dynamical systems; in particular, the relation between the Koopman tuple of topologically conjugate systems is established. The proposed results are demonstrated on several second-order nonlinear dynamical systems including a bistable toggle switch. 
Our method elucidates a strategy for designing discovery experiments: experiment execution can be done in many steps, and models from different invariant subspaces can be combined to approximate the global Koopman operator. 
\end{abstract}

\begin{keyword}
Dynamical systems, Koopman operator, data-driven, invariant subspaces, phase space stitching.
\end{keyword}

\end{frontmatter}


\section{Introduction}


Multiple equilibria, periodic orbits, limit cycles, chaotic attractors, and other complicated behaviors are common in nonlinear dynamical systems, and identifying such systems from data can be difficult at times. Furthermore, most known approaches for studying or analyzing nonlinear systems theory necessitate knowledge of mathematical models governing the system of interest, and most methods necessitate the computing of a unique energy function for the underlying system. It is difficult to compute such an energy function, especially for a higher-dimensional system. Recent research suggests that nonlinear systems can be examined using data alone, without the use of any mathematical models.


The data from a nonlinear system is mapped to a higher-dimensional observable space, where it may be examined successfully utilizing transfer operators such as the Perron-Frobenius and Koopman operators \cite{mezic2005spectral, lasota2013chaos,rowley2009spectral,vaidya_lyapunov_measure,mezic2013analysis, yeung2017learning}. Because both operators are adjoint, a dynamical system can theoretically be analyzed using any of them. However, in terms of application to real-world circumstances, each of the operators has its own set of advantages and disadvantages. In comparison to the Perron-Frobenius operator, Koopman operators are more commonly utilized for data-driven analysis.

Bernard Koopman introduced the Koopman operator in \cite{koopman1931hamiltonian} and this seminal work became a popular tool with \cite{mezic2005spectral} and several other works followed where Koopman operator theory is used in the system identification \cite{rowley2009spectral,schmid2010dynamic}, for control design \cite{proctor2016dynamic,brunton2016koopman,huang2018feedback}, sensor fusion \cite{williams2015data} and analysis of spectrally conjugate systems \cite{mezic2020spectrum},  finding observability gramians or observers \cite{vaidya2007observability,surana2016linear,yeung2018koopman}, study of chaotic systems \cite{arbabi2017ergodic,arbabi2017study}, data-driven information transfer \cite{sinha_IT_CDC_2015,sinha_IT_CDC_2016,sinha_IT_ICC_2017}, data-driven based causal inference in dynamical systems \cite{sinha_data_IT_journal}, in data-driven classification of power system stability \cite{sinha_data_IT_stability}, and in power system coherency identification, power system stability monitoring  \cite{susuki2016applied,barocio2014dynamic,hernandez2018nonlinear,barocio2014dynamic,raak2015data}, dynamic state estimation in power networks \cite{netto2018robust}, fault, cyber-attack localization in cyber-physical systems \cite{ramos2019dynamic,nandanoori2020model}, computational neuroscience applications \cite{marrouch2019data} and data assimilation for climate forecasting \cite{slawinska2019quantum}. 

The Koopman operator definition is an infinite-dimensional linear operator that yields the evolution of a dynamical system on an infinite-dimensional space of functions. Finding an infinite dimensional (linear) operator is computationally infeasible, hence many approaches are devised to best approximate the Koopman operator in finite dimensional space \cite{rowley2009spectral,tu2013dynamic,williams2014kernel,williams2015data,sinha_robust_acc,sinha_robust_journal,sinha_sparse_acc,boddupalli2019koopman, sinha2020data,sinha2020equivariant,bakker2019learning}. Most popular methods include dynamic mode decomposition (DMD) \cite{rowley2009spectral,tu2013dynamic}; extended dynamic mode decomposition (E-DMD) \cite{williams2015data,li2017extended}; kernel dynamic mode decomposition (K-DMD) \cite{williams2014kernel}, naturally structured dynamic mode decomposition (NS-DMD) \cite{huang2017data}, Hankel-DMD \cite{arbabi2017ergodic}, deep dynamic mode decomposition (deep-DMD)\cite{yeung2017learning,li2017extended,takeishi2017learning,lusch2018deep,hasnain2020steady}. All these methods are data-driven and accuracy of some such approximations are discussed in \cite{zhang2017evaluating}. The authors of \cite{johnson2018class} look at the space of Koopman observables, which includes the underlying system's states. The authors of \cite{johnson2018class} also demonstrate the fidelity of this class of observable functions and discuss performance tuning options.


\subsection{Problem Statement}
The major goal of this research is to use data to examine the global phase space of nonlinear dynamical systems. We identified a few natural questions that occur during experimental phase space analysis when there is less knowledge about the underlying system and the experiments are costly. Given the time-series data corresponding to an invariant subspace of a nonlinear system, it is of interest to know as additional time-series data emerges, can the other invariant subspaces be discovered? Is it possible to uncover the global phase space? On the contrary, suppose if the evolution of a system in different subspaces is known, can they be combined to analyze the global phase space? Is it possible to examine the global phase space with only one invariant subspace and knowledge of the nature of underlying symmetry? Finally, can the phase space properties of a topologically conjugate system be determined if the phase space properties of a system are known?

This work attempts to address these questions by leveraging the tools from Koopman operator theory. The experimental phase space discovery procedure of a classic biological nonlinear network, the bistable toggle switch, inspired this work.

\subsection{Summary of Contributions}
The Koopman operator and its spectral properties are utilized to investigate the global phase space of nonlinear dynamical systems in this work. Our contributions are as follows:
\begin{itemize}
    \item {We provided conditions for updating the local Koopman operator when new spatial time-series data emerges.} 
    \item We developed conditions for discovering invariant subspaces from a global phase space, and vice versa.
    \item We presented a phase space stitching result in which many local Koopman operators were fused to produce a (single) global Koopman operator.
    \item We extended our phase space stitching result to obtain the global Koopman operator for equivariant dynamical systems when data is available only in any one of the invariant subspaces but the knowledge about the underlying symmetry between the invariant subspaces is known.
    \item Results were developed in order to comprehend the global phase space of topologically conjugate systems.
\end{itemize}

We hope the phase space learning and analysis from this work serves as a basis for designing and planning future experiments. This work is an extension to our previous works \cite{sinha2020equivariant,nandanoori2020data}. {In particular, the partitioning of the state space into multiple invariant sets, and the phase space stitching result are introduced in} \cite{nandanoori2020data}. The Koopman operator theory for equivariant systems is developed in \cite{sinha2020equivariant}. The rest of the contributions are exclusive to this work.  


The rest of the paper is organized as follows. Section \ref{sec:DMD_variants} describes the preliminaries for operator theoretic methods and consists of a brief overview on the computation of finite-dimensional approximation of Koopman operators. Some thoughts on the relation between the state space and the observable space are discussed in Section \ref{sec:discussion}. Discovery of new invariant subspaces from the spectral properties of the Koopman operator and the phase space stitching results along with a use case for equivariant dynamical systems is presented in Section \ref{sec:global_phase_space}. Using Koopman operators, global phase space learning and analysis for topologically conjugate dynamical systems is presented in Section \ref{sec:TC_systems}. The proposed Koopman operator theoretic methods are illustrated on  second-order dynamical systems in Section \ref{sec:simulation} and the paper concludes with final remarks in Section \ref{sec:conclusion}. 


\section{Mathematical Preliminaries and Dynamic Mode Decomposition (DMD) Variants}
\label{sec:DMD_variants}
We recall the necessary concepts that forms a base for the results in this work from \cite{mezic2005spectral,lasota2013chaos,mezic2013analysis,williams2015data,budivsic2012applied}. Consider the following discrete-time nonlinear system
\begin{align}
    x_{t+1} = T(x_t)
    \label{eq:DT_NL_sys}
\end{align}
 where $x \in {\cal M} \subseteq  \mathbb{R}^n$ and $T:{\cal M} \to {\cal M}$. The phase space ${\cal M}$ is assumed to be a compact  manifold with Borel $\sigma$ algebra $\mathcal{B({\cal M})}$,  $\mu$ denotes the measure \cite{mezic2005spectral,lasota2013chaos} and the nonlinear map $T$ is assumed to be $\mathcal{C}^1$ continuous.

Let $\mathbb{N}$ denote the set of natural numbers. Recall the definition of invariant set from the classical dynamical systems theory as follows. 
\begin{definition}[Invariant Subspace]
Let $M \subseteq {\cal M}$ be a subspace of the dynamical system, Eq. \eqref{eq:DT_NL_sys}. Then, the subspace $M$ is said to be (positively) invariant if for every $x_0 \in M$, $T^n(x_0) \in M$ for all $n\in\mathbb{N}$.
\end{definition}
Define an observable function on the state space to be a scalar valued function ${\psi}:\mathcal{M} \rightarrow \mathbb{C}$, where $\mathbb{C}$ denotes the set of complex numbers. The scalar valued function ${\psi}$ belongs to the space of functions ${\cal G}$ acting on elements of $\cal M$. Usually for any dynamical system, the output function describes the evolution of the states and this output function can be considered as an observable function. Moreover, any differentiable function of the states can be chosen as an observable function. Normally, square integrable functions defined on ${\cal M}$ forms a good choice of observable functions. Hence, the space of functions, ${\cal G} = L_2({\cal M}, {\cal B}, \mu)$ is infinite dimensional. The Koopman operator can now be defined as follows. 
\begin{definition}[Koopman operator] For a dynamical system $x\mapsto T(x)$ and for ${\psi} \in \mathcal{G}$, the Koopman operator $({\mathbb{U}} : {\cal G} \to {\cal G})$ associated with the dynamical system is defined as
\[ \left[\mathbb{U} {\psi}\right] (x) = \psi(T(x)). \]
\end{definition}

The Koopman operator $\mathbb{U}$ is defined on the space of functions and is a linear operator. The function space ${\cal G}$ is invariant under the action of the Koopman operator. Essentially, instead of studying the time evolution of nonlinear system \eqref{eq:DT_NL_sys} on the state space ${\cal M}$, we study the evolution of \textit{observables} in an infinite dimensional space where the evolution is linear. Apart from being linear, the Koopman operator is also a positive operator: for any $g \geq 0$, $[\mathbb{U} g] (x) \geq 0$.

The infinite dimensional Koopman operator admits discrete and continuous spectrum. In the scope of this work, we are interested in the discrete spectrum of the Koopman operator. The following section describes the popular methods to identify a finite dimensional approximate of the Koopman operator from the time-series data. 

\subsection{DMD Variants}
This subsection discusses the computation of the approximate Koopman operator from the time-series data using prominent methods such as dynamic mode decomposition (DMD), extended dynamic mode decomposition (EDMD) and deep dynamic mode decomposition (deepDMD). 

Schmid and others introduced DMD in \cite{schmid2010dynamic} to approximate the Koopman operator that describes the coherent features of fluid flow and extract dynamic information from flow fields through the spectrum of the approximate Koopman operator. The authors in \cite{williams2015data} generalized the idea of DMD to introduce EDMD.  In EDMD, an extended dictionary of functions is used to approximate the action of the infinite dimensional Koopman operator.

Consider the time-series data $X = [x_0,\;x_1,\cdots , x_k]$ from an experiment or simulation of a dynamical system and stack them as shown below. 
\begin{align*}
X_{pr} = [x_0, x_1, \dots, x_{k-1}], \qquad X_{fw} = [x_1,x_2, \dots,x_k],
\end{align*}
where for every $i$, $x_i \in {\cal M}$.  Assuming knowledge of the space ${\cal G} = L_2({\cal M},{\cal B}, \mu)$ we can define the set of dictionary functions ${\cal D} = \{\psi_1,\dots,\psi_N\}$ where $\psi_i \in L_2({\cal M},{\cal B},\mu)$ and $\psi_i:{\cal M} \to \mathbb{C}$ are scalar valued functions. {Each observable function in the dictionary ${\cal D}$ are assumed to be bounded.} Let the span of these $N$ dictionary functions in ${\cal D}$ be denoted by ${\cal G}_{\cal D}$ such that ${\cal G}_{\cal D} \subset {\cal G}$. 
%

Since we are considering either time-series data from an experiment or simulation in the scope of this work, it is natural to describe their evolution as a discrete-time system, and so we describe the Koopman operator theory in  discrete-time setting. All of the results presented in this paper, however, are valid in the continuous-time setting as well. 

Define a vector valued observable function ${\Psi}:{\cal M} \to \mathbb{C}^N$ such that,
\begin{align*}
{\Psi}(x):=\begin{bmatrix}\psi_1(x) & \psi_2(x) & \cdots & \psi_N(x)\end{bmatrix}.
\end{align*}
Then by definition for a function $\hat{\psi}_i \in {\cal G}_{\cal D}$,  $\hat{\psi}_i$ can be expressed as an inner product of a coefficient vector ${a}_i$ with the dictionary functions as shown below. 
\begin{align*}
\hat{\psi}_i(x) = \sum_{\ell=1}^N a_{i \ell} \; \psi_{ \ell} =  {\Psi}(x) {a}_i
\end{align*}
for every $x \in X$. Similarly for $\hat{{\Psi}}(x) = \begin{bmatrix}\hat \psi_1(x) & \hat \psi_2(x) & \cdots & \hat \psi_N(x)\end{bmatrix}$, we obtain, 
\begin{align*}
  \hat{{ \Psi}}( x) =  \Psi(x) A
\end{align*}
where $A = \begin{bmatrix}  a_1 & a_2 & \dots & a_N \end{bmatrix} \in \mathbb{R}^{N \times N}$.  

For every snapshot pair, $x_{pr}, x_{fw}$ such that $x_{fw} = T(x_{pr})$, we obtain, 
\begin{align*}
    \hat{\Psi}(x_{fw}) & = \left[ \mathbb{U} \hat{\Psi}\right](x_{pr}) \\
    & =  \Psi  (x_{pr}) \hat{\mathbb{U}} A + r_X(x)
\end{align*}
where $\hat{\mathbb{U}}$ is a projection of $\mathbb{U}$ on the snapshot space. The residual, $r_X(x) \in \mathbb{C}^{\mathbb{N}}$ is uniformly bounded on the snapshot space $X$ and $r_X(\cdot) \in \mathcal{G} \setminus \mathcal{G}_{\cal D}$. 
%
%
%
Therefore the Koopman learning problem now results in minimizing the residual, $r_X$ which leads to the following optimization problem. 
\begin{align}
    \min_{A, \hat{\mathbb{U}}} \;\; \parallel  \left(\Psi(x_{fw}) -  \Psi(x_{pr})\right) \hat{\mathbb{U}} A \parallel    
    \label{eq:EDMD_int_step}
\end{align}
The above optimization problem in \eqref{eq:EDMD_int_step} is equivalent to $$\min_{\hat{\mathbb{U}}} \;\; \parallel \Psi(x_{fw}) - \Psi(x_{pr}) \hat{\mathbb{U}} \parallel. $$
Minimizing the residual over the given snapshot space, $X_{pr}$ and $X_{fw}$ leads to the following minimization problem. 
\begin{align*}
    \min_{\tilde{\mathbb{U}}} \;\; \left|\left|  \left(\begin{bmatrix} {\Psi}(x_{1})\\ \vdots \\ {\Psi}(x_{k}) \end{bmatrix} - \begin{bmatrix}{\Psi}(x_{0}) \\ \vdots \\  {\Psi}(x_{k-1})\end{bmatrix} \tilde{{\mathbb{U}}} \right)  \right|\right|  
\end{align*}
where \[{\tilde{\mathbb{U}}} = \begin{bmatrix} {\cal K}_1 & 0 & \hdots & 0 \\ 0 & {\cal K}_2 & \hdots & 0 \\ \vdots & \vdots & \ddots & \vdots \\0 &0 &\hdots & {\cal K}_k\end{bmatrix}\in\mathbb{C}^{kN \times kN }
\]

Note that we could thus parameterize $\tilde{\mathbb{U}}$ in terms of each snapshot, thus mapping each snapshot to the diagonal entries ${\cal K}_j$ for $j = 1,...,k.$  In practice, with standard DMD algorithms, it is common to assume that the action of $\tilde{\mathbb{U}}$ is homogeneous across snapshot space and that all ${\cal K}_j = {\cal K} \in \mathbb{C}^{N \times N}.$  The precise implications of this assumption are not immediately clear, though it is a widely employed practice within the DMD community.  This results in the following classical and frequently formulated EDMD problem: 

\begin{equation}
\min_{\cal K}\parallel {Y_{fw}} - {Y_{pr}} {\cal K} \parallel_2^2,
\label{edmd_op}
\end{equation}
where
\begin{align*}
& {Y_{pr}} = \begin{bmatrix} {\Psi}(x_0) \\ {\Psi}(x_1) \\ \vdots \\ {\Psi}(x_{k-1}) \end{bmatrix}, \qquad {Y_{fw}} = \begin{bmatrix} {\Psi}(x_1) \\ {\Psi}(x_2) \\ \vdots \\ {\Psi}(x_k) \end{bmatrix}
\end{align*}
and ${\cal K}$ is the finite dimensional approximation of the Koopman operator $\mathbb{U}$. For convenience, we minimize the upper bound of the induced matrix 2-norm, which is the Frobenius norm
\begin{equation}
\min_{\cal K}\parallel {Y_{pr}}{\cal K}-{Y_{fw}}\parallel_F^2,
\label{edmd_op}
\end{equation}
with ${\cal K}\in\mathbb{C}^{N\times N}$. The solution to the optimization problem \eqref{edmd_op} can be obtained explicitly and is given by 
\begin{align*}
{\cal K}={Y_{pr}}^\dagger {Y_{fw}}
\end{align*}
where ${Y_{pr}}^{\dagger}$ is the Moore-Penrose inverse of $Y_{pr}$. Note that DMD is a special case of the EDMD algorithm with ${\Psi}(x) = x$. Hereafter, the infinite dimensional Koopman is denoted by $\mathbb{U}$ and the finite dimensional approximation of the Koopman operator is denoted by ${\cal K}$.

EDMD is formulated under the assumption that the observable functions for the underlying system are available. As a result, using EDMD to compute the approximation Koopman operator simply comes down to solving a least squares problem (similar to DMD however with a richer observable space that could better capture the properties of the underlying nonlinear system such as an attractor set).  
It's natural to wonder if considering the state inclusive observable space, that is, integrating the observable functions defined in EDMD with observables (measured functions of states) considered in DMD, has any benefit. The authors in \cite{johnson2018class} show the state inclusive observables defined on certain classes of dictionaries yield approximately Koopman invariant subspaces and the corresponding Koopman can simultaneously learn the underlying system dynamics and its corresponding flow.
%


Identifying the observable functions for a large scale system (perhaps with thousands or millions of states) is difficult. Furthermore, computing the Moore-Penrose inverse corresponding to this large-scale system for data acquired under a variety of initial conditions adds a new level to the challenge. DeepDMD approaches, which were recently developed in \cite{yeung2017learning,li2017extended,takeishi2017learning}, provide an alternative in this regard, as they make no assumptions on the choice of observable functions. Furthermore, because deepDMD solves for minimizing Eq. \eqref{edmd_op} as an optimization problem with decision variables as observable functions and the approximate Koopman operator, it does not require an explicit computation of Moore-Penrose inverse. For further implementation details and detailed discussion on deepDMD, we refer the readers to the works  \cite{yeung2017learning,li2017extended,takeishi2017learning}. 

Corresponding to the time-series data $X_{pr}, X_{fw}$, if the dictionary functions are chosen in such a way that the optimal cost in Eq. \eqref{edmd_op}  is in the acceptable range of accuracy, then the spectral properties of the finite dimensional Koopman operator and in particular, the dominant eigenvalues and their respective eigenfunctions unfold the nonlinear system properties.

\section{Discussion on the Observable Functions}
\label{sec:discussion}
Consider the dynamical system given in \eqref{eq:DT_NL_sys}. In general, it can have multiple attractors for example, in the case of a bistable toggle switch which is a second-order system (as shown in Eq. \eqref{eq:bistable_toggle_switch} in Section \ref{sec:simulation}). 
Assume that the coordinate space defined by these state inclusive dictionary functions is topologically conjugate to a linear system when the dictionary functions are evaluated alongside the states. As a result, the first two elements of the Koopman equation correspond to the original dynamical system in the case of the bistable toggle switch. While the first two elements produce a multistable phase portrait, the appearance is linear when raised into higher dimensional space.


In the following we briefly recall the relation between the observable functions and the Koopman eigenfunctions and introduce the Koopman modes. 

\subsection{Relation Between the Observables and the Koopman Eigenfunctions}
Let $\lambda_j$ and $\phi_j$ denote the eigenvalues and eigenfunctions corresponding to the point spectrum of the infinite dimensional Koopman operator such that 
%
\[\mathbb{U} \phi_j =  \lambda_j \phi_j, \quad j = 1,2,\dots  \]
Let ${f}$ be a vector-valued observable such that 
\[
{f} = \begin{bmatrix} f_1 \\ f_2 \\ \vdots \\ f_m
\end{bmatrix}
\]
where each $f_i \in {\cal G}$. Suppose if each $f_i$ lies in the span of eigenfunctions of the Koopman operator, then ${f}$ can be expressed as a linear combination of the eigenfunctions as follows. 
%
%
\begin{align}
    {f}(x) = \sum_{i=1}^{\infty} \phi_i(x) {\vartheta}_i = \sum_{i=1}^{\infty} \lambda_i^j \phi_i(x_0) {\vartheta}_i.
    \label{eq:KMD_expansion}
\end{align}
%
The vector coefficients, ${\vartheta}_i$ in Eq. \eqref{eq:KMD_expansion} are the Koopman modes and this expansion is usually referred to as the Koopman mode decomposition (KMD). The magnitude and frequency of each Koopman mode is given by the magnitude and phase of its corresponding eigenvalue. The spatio-temporal aspects in the dynamics are encoded in the eigenvalues (temporal signatures) and the term $\phi_i(x_0)  \vartheta_i$ capture the spatial signatures. Notice that the Koopman modes are indeed a function of the initial condition. There are several methods to compute these Koopman modes and interested readers can refer to \cite{susuki2016applied,budivsic2012applied,susuki2015prony,bagheri2013koopman,arbabi2017study,sharma2016correspondence}. The Koopman eigenvalues, Koopman eigenfunctions and the Koopman modes are usually referred to as the Koopman tuple. 

Recall that the learnt linear dynamics applying Koopman operator theory evolves on the space of observables. The next section describes the relation between the space of observables and the state space. 

\subsection{Relation between the Observable Space and the State Space}
For the nonlinear dynamical system, Eq. \eqref{eq:DT_NL_sys}, let $\mathbb{U}$ be the corresponding Koopman operator such that  
%
\[
\mathbb{U} \Psi(x_t) = {\Psi}(x_{t+1}).
\]

If $\phi(x)$ is an eigenfunction of $\mathbb{U}$ with eigenvalue $\lambda$, then 
\[
\mathbb{U}\phi(x_t) = \lambda\phi(x_t) = \phi(x_{t+1}).
\]

Among the eigenfunctions of the Koopman operator $\mathbb{U}$, the ones with unit eigenvalue are of special interest as they correspond to the attractor sets (like equilibrium points, limit cycles) of the underlying system \cite{budivsic2012applied}. In particular, the number of unit eigenvalues of the Koopman operator gives the number of attractor sets for the dynamical system. 
%
Hence, to find all the attractor sets of an invariant subspace ${\cal M}$, it suffices to solve 
\begin{equation*}
    \phi(x) = \mathbb{U}\phi(x)
\end{equation*}
Moreover, note that for an equilibrium point $x\in\mathcal{M}$, $\phi(x)={\mathbb{U}^n}\phi(x)$ for all $n\in\mathbb{N}$, while for any point on a limit cycle there exists some $n_0\in\mathbb{N}$ such that 
\begin{equation*}
    \phi(x_*) = {\mathbb{U}^{kn_0}}\phi(x_*), \quad \forall \quad  k\in\mathbb{N},
\end{equation*}
such that corresponding to the eigenvalue $\lambda = 1$, the eigenfunction, $\phi$ satisfies
\begin{equation*}
    \phi(T^{n_0}(x_*)) = \mathbb{U}^{n_0} \phi(x_*) = \lambda^{n_0} \phi(x_*)    
\end{equation*}
Next, we briefly discuss the relation between the evolution of the original nonlinear system on the state space and the observable space. Consider the discrete-time nonlinear system as shown in Eq.  \eqref{eq:DT_NL_sys}.  
Suppose we have $x_0^1, ..,x_0^k$ initial conditions of the dynamical system, then for every initial condition, we have
\begin{align*}
x_n^i = T^n(x_0^i) \qquad \mbox{for}\; \;   i = 1,...,k.   
\end{align*}
The corresponding Koopman operator, trained on these trajectories will give the result 
\[
\Psi(x_{n}) = \mathbb{U}\Psi(x_{n-1})= [\mathbb{U}\circ\mathbb{U}]\Psi(x_{n-2})=\cdots = \underbrace{[\mathbb{U}\circ \cdots \circ \mathbb{U}]}_{n\; \mbox{ times}}\Psi(x_{0})= \mathbb{U}^n \Psi(x_0)
\]
Then the flow map is approximated by the Koopman projection on the state-observable as 
\[ 
T^n(x_0^i) = P_x \Psi_n(x_0^i),
\] 
where $P_x$ is the projection operator which maps from the observable space on to the state space. 

In the next section, we derive conditions to search for invariant subspaces starting locally from an invariant subspace of a dynamical system and develop the phase space stitching result to identify the global Koopman operator. 
\section{Global Phase Space Exploration Using Approximate Koopman Operator}
\label{sec:global_phase_space}

%


This section discusses the phase space analysis of a nonlinear system using the approximate Koopman operator obtained using time-series data from either a simulation or an experiment. 

%

\subsection{Phase Space Discovery}
\label{sec:phase_space_discovery}
We begin the analysis by considering the time-series data in an invariant subspace and computing the corresponding local Koopman operator. {We define conditions under which the Koopman operator must be updated as new spatial data is added. The Koopman operator for the full state space is eventually computed under the assumption that the spatial data is available from all the invariant subspaces in the system. Otherwise, constructing the global Koopman operator or identifying the global phase space is impossible without some understanding about the underlying system (refer to the discussion in Subsection} \ref{sec:phase_space_stitching} {where the symmetry information is assumed to be known).}

%

\begin{assumption}
Let $M_p \subset {\cal M}$ be an invariant subset of the phase space of the dynamical system \eqref{eq:DT_NL_sys}.  We suppose that each invariant subset $M_p$ has compact closure.
\end{assumption}
Then for the time-series data in $M_p$, the Koopman operator is given by ${\cal K}_p$. Let,  
\begin{align*}
    {\cal E}_p({\Psi}_p, n) :=  \max_{x_0 \in M_p} ||{\Psi}_p (T^n(x_0)) -  \mathcal{K}_{p}^n{\Psi}_p(x_0) || 
\end{align*}
where  $n$ denote the number of time-steps for which the initial condition is evolved. The notation, ${\cal K}_p^n$ indicates that Koopman operator ${\cal K}_p$ is raised to the power $n$. The observable functions denoted by $\Psi_p$ with an additional subscript $p$ to imply that these observables are defined on the subspace, $M_p$ of the state space. Since $M_p$ has a compact closure, ${\cal E}_p({\Psi}_p, n)$ exists and is finite for all $n \in \mathbb{N}$. 
Hence, there exists an $\varepsilon_p({\Psi}_p) \geq 0$ such that 
\begin{align}
    \lim_{n\to \infty} {\cal E}_p({\Psi}_p, n)\leq \varepsilon_{p}({\Psi}_p)
    \label{eq_Koopman_learning_error}
\end{align}
{The notation, ${\cal E}_p$ denote that the initial condition to compute this error is chosen from the subspace $M_p$.} In essence, for each subspace $M_p$ there is a finite upper-bound for the steady-state prediction error ${\cal E}_p({\Psi}_p, n).$  The magnitude of $\varepsilon_p({\Psi}_p)$ indicates the fitness of the Koopman operator for the underlying time-series data generated from a particular subspace $M_p$.  
%

\begin{assumption}
The invariant subspace, $M_p$ is a Koopman-invariant subspace, that is, the subspace formed by the span of observables is invariant under the action of the Koopman operator, ${\cal K}_p$. 
\label{ass:Koopman-invariant}
\end{assumption}
It follows from Assumption \ref{ass:Koopman-invariant}, $\varepsilon_p( \Psi_p) = 0$. 
{Assume the time-series data from the invariant subspace $M_p$ is given. Then, using one of the DMD variants explained in Section} \ref{sec:DMD_variants}  {the associated local approximate Koopman is computed from the time-series data. When new spatial time-series data emerges, let's assume it belongs to the subspace $M_q$. Now, it is natural to wonder how $M_q$ is related with $M_p$. Furthermore, under what conditions is a new approximate Koopman computed corresponding to the new spatial time-series data? The following result formally specifies when the Koopman operator must be updated.}


\begin{lemma}
Let the dynamical system \eqref{eq:DT_NL_sys} have $v$ invariant subspaces, $\{M_i\}_{i=1}^v$ such that $M_i \cap M_j = \Phi$ for $i\neq j$, where $\Phi$ is the null set. Suppose $M_p \subset {\cal M}$ be a Koopman-invariant subspace with the corresponding Koopman operator ${\cal K}_p$ trained on $\Psi_p$ such that span of $\Psi$ is invariant under ${\cal K}_p$. 
Then, given a set $M_q\subset M$ we have
\[{\cal E}_q( \Psi_p, n) \geq {\cal E}_p( \Psi_p,n).\]
\end{lemma}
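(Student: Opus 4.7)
The key observation is that the hypothesis forces the left-hand side error $\mathcal{E}_p(\Psi_p,n)$ to be exactly zero. The plan is to first unpack what "Koopman-invariant subspace with $\mathcal{K}_p$ trained on $\Psi_p$" means at the pointwise level, then invoke the non-negativity of norms on the right-hand side. This is a "vacuous lower bound" argument, so the main task is making sure the two quantities $\mathcal{E}_p(\Psi_p,n)$ and $\mathcal{E}_q(\Psi_p,n)$ are defined on compatible objects — both are evaluated on the same observable vector $\Psi_p$ and the same operator $\mathcal{K}_p$, only the initial-condition set differs.

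First, I will use Assumption~\ref{ass:Koopman-invariant} together with the stated invariance of $\mathrm{span}(\Psi_p)$ under $\mathcal{K}_p$ to conclude that for every $x_0 \in M_p$ the identity $\mathcal{K}_p\,\Psi_p(x_0)=\Psi_p(T(x_0))$ holds exactly. A short induction on $n$ (using invariance of $M_p$ under $T$ so that $T(x_0)\in M_p$ and the identity may be reapplied) then gives $\mathcal{K}_p^{\,n}\Psi_p(x_0)=\Psi_p(T^n(x_0))$ for every $x_0\in M_p$ and every $n\in\mathbb{N}$. Taking the max over $x_0 \in M_p$ yields $\mathcal{E}_p(\Psi_p,n)=0$, strengthening the limiting statement $\varepsilon_p(\Psi_p)=0$ noted immediately after Assumption~\ref{ass:Koopman-invariant}.

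Next, I will observe that, by construction, $\mathcal{E}_q(\Psi_p,n)$ is the maximum over $x_0\in M_q$ of a norm, hence
\[
\mathcal{E}_q(\Psi_p,n)=\max_{x_0\in M_q}\bigl\|\Psi_p(T^n(x_0))-\mathcal{K}_p^{\,n}\Psi_p(x_0)\bigr\|\;\geq\;0.
\]
Combining with the previous step gives $\mathcal{E}_q(\Psi_p,n)\geq 0=\mathcal{E}_p(\Psi_p,n)$, which is exactly the claim.

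There is no real obstacle here; the only subtlety worth flagging in the writeup is the role of the disjointness hypothesis $M_i\cap M_j=\Phi$. It is not used to prove the weak inequality, but it motivates why one expects the bound to be strict in practice: since $M_q\cap M_p=\Phi$, a point $x_0\in M_q$ generically does not lie on a trajectory for which $\mathcal{K}_p$ was fit, so the predictor $\mathcal{K}_p^{\,n}\Psi_p$ has no reason to agree with $\Psi_p\circ T^n$. I would mention this briefly as a remark after the proof to justify using the inequality as the criterion (stated in the surrounding text) for deciding that a newly observed subspace $M_q$ warrants updating the Koopman operator rather than reusing $\mathcal{K}_p$.
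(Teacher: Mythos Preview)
Your argument is correct and rests on the same observation the paper uses: Assumption~\ref{ass:Koopman-invariant} forces $\mathcal{E}_p(\Psi_p,n)=0$, after which the inequality is immediate from non-negativity of a norm. One small correction to your closing remark: the paper's own proof splits into the cases $M_q\subset M_p$ and $M_q\not\subset M_p$, so $M_q$ is \emph{not} assumed to be one of the disjoint invariant subspaces $\{M_i\}$, and your appeal to $M_q\cap M_p=\Phi$ for the strict-inequality heuristic slightly misreads the hypothesis (though it does not affect the validity of the proof itself).
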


\begin{proof} Depending on how the subspace $M_q$ is related to $M_p$, we have the following two cases. 

\textbf{Case 1: $M_q \subset M_p$.} In this case, ${\cal E}_q( \Psi_p, n) = {\cal E}_p( \Psi_p,n)$ since $M_q \subset M_p$ and span of $\Psi$ is invariant under ${\cal K}_p$.

\textbf{Case 2: $M_q \not\subset M_p$.} In this case, either
    \begin{enumerate}
        \item By Assumption \ref{ass:Koopman-invariant}, ${\cal E}_q( \Psi_p, n) \le {\cal E}_p( \Psi_p,n) \equiv 0$ would imply that ${\cal E}_q( \Psi_p, n) \equiv 0$    and in this case, the dictionary functions $ \Psi_p$ is valid for the subspace $M_q$ (or)
        \item ${\cal E}_q( \Psi_p, n) > {\cal E}_p( \Psi_p,n)$ and in this case, a new dictionary needs to be defined to compute ${\cal K}_q$ for subspace $M_q$.
    \end{enumerate}
%
 
%
%
\label{lemma_K_learning_error}
\end{proof}
%



\begin{remark}
If additional temporal time-series data is obtained in contrary to the spatial time-series data as discussed in Lemma \ref{lemma_K_learning_error}, then recursive least squares is used to update the Koopman operator as shown in \cite{sinha2020data}. 
\end{remark}


{Hitherto we have seen how to update the Koopman operator when new spatial or temporal time-series data emerges. In the following, we present conditions under which a new invariant subspace is discovered using the spectral properties of the updated approximate Koopman operator and the local approximate Koopman operator corresponding to $M_p$. }


\begin{proposition}
Let the subspace ${M}_p \subset {\cal M}$ be the smallest invariant subspace of ${\cal M}$ and the corresponding Koopman operator is given by ${\cal K}_{p}$. Choose ${M}_q$ such that ${M}_p \subset {M}_q \subseteq {\cal M}$ and denote the Koopman operator corresponding to ${\cal M}_q$ by ${\cal K}_q$. Then the discovery of new invariant subspaces in the phase space of the original dynamical system is such that the difference $||{\cal K}_{q}||_{\rho} - ||{\cal K}_{p}||_{\rho}$ increases monotonically where $\parallel \cdot \parallel_{\rho}$ denote the geometric multiplicity of the unitary eigenvalue, $\lambda = 1$, that is, number of distinct eigenvectors associated with the stable eigenvalue, $\lambda = 1$. 
\label{prop:discovery_inv_space}
\end{proposition}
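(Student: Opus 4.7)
The plan is to exploit the correspondence, established earlier in Section \ref{sec:discussion}, between attractor sets of the dynamical system and unit-eigenvalue eigenfunctions of the Koopman operator. Specifically, the paper has already stated that for an invariant subspace $M$, the number of linearly independent eigenfunctions of the associated Koopman operator at $\lambda=1$ equals the number of attractor sets (equilibria, limit cycles, etc.) contained in $M$. So $\|\mathcal{K}_p\|_\rho$ is precisely a count of attractors inside $M_p$, and likewise for $\|\mathcal{K}_q\|_\rho$. The proposition then reduces to showing that enlarging $M_p$ to $M_q$ can only add attractors, never remove them, and hence the difference of these counts is non-negative and can only grow as the explored region is expanded further.

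First I would fix notation by letting $\mathcal{A}(M)$ denote the collection of attractor sets of \eqref{eq:DT_NL_sys} that lie inside an invariant subspace $M$, and letting $\phi_1^M,\dots,\phi_{k}^M$ denote the corresponding unit-eigenvalue eigenfunctions of the Koopman operator restricted to $M$, with $k = |\mathcal{A}(M)| = \|\mathcal{K}_M\|_\rho$. Since $M_p$ is assumed to be the smallest invariant subspace, it contains exactly $\|\mathcal{K}_p\|_\rho$ attractors, which I can take as the base case.

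Next I would establish that $M_p \subset M_q$ implies $\mathcal{A}(M_p) \subseteq \mathcal{A}(M_q)$. This uses invariance: any attractor $A \in \mathcal{A}(M_p)$ is already an attractor of the global dynamics $T$ restricted to the invariant subspace $M_p$, and because $M_q$ contains $M_p$ and is itself invariant, $A$ survives as an attractor in $M_q$. Translating back through the eigenfunction correspondence, each $\phi_i^{M_p}$ extends (or pulls back via the projection onto $M_p$) to a unit-eigenvalue eigenfunction of $\mathcal{K}_q$, and these extensions remain linearly independent on $M_q \supseteq M_p$. Hence $\|\mathcal{K}_q\|_\rho \ge \|\mathcal{K}_p\|_\rho$, so the difference is non-negative. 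Monotonic growth as the explored subspace expands then follows by iterating this argument: for any chain $M_p \subset M_q \subset M_r \subseteq \mathcal{M}$, the same reasoning gives $\|\mathcal{K}_r\|_\rho \ge \|\mathcal{K}_q\|_\rho \ge \|\mathcal{K}_p\|_\rho$, so $\|\mathcal{K}_r\|_\rho - \|\mathcal{K}_p\|_\rho \ge \|\mathcal{K}_q\|_\rho - \|\mathcal{K}_p\|_\rho$.

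The main obstacle I anticipate is the eigenfunction-extension step: given an eigenfunction $\phi_i^{M_p}$ defined on the smaller subspace, one must produce a genuine eigenfunction of $\mathcal{K}_q$ on the larger $M_q$ whose support (or non-vanishing set) still picks out the same attractor, and argue independence. Since $M_p$ is $T$-invariant, the pullback $\phi_i^{M_p}\circ \pi_{M_p}$ (where $\pi_{M_p}$ is any measurable retraction onto $M_p$ on $M_q$) is a natural candidate, but care is needed so that it remains in $L^2$ and satisfies $\mathbb{U}\phi = \phi$ only because of the invariance of $M_p$ under $T$. Once that technical point is handled, linear independence of the extended eigenfunctions, together with the existence of strictly new unit-eigenvalue eigenfunctions supported on $M_q \setminus M_p$ whenever $M_q$ contains a strictly new attractor, closes the argument and delivers the strict monotone growth in the discovery step.
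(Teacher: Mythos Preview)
Your approach is correct and rests on the same key inequality the paper uses: for $M_p \subset M_q$ one has $\|\mathcal{K}_q\|_\rho \geq \|\mathcal{K}_p\|_\rho$, from which monotonicity of the difference along an increasing chain is immediate. The paper's own proof is a single sentence that simply asserts this inequality without further argument; you have supplied the underlying justification via the attractor--eigenfunction correspondence and even flagged the eigenfunction-extension subtlety, so your write-up is strictly more detailed than what appears in the paper.
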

\begin{proof}
The proof follows by noting that for a subspace ${\cal M}_q \supset {\cal M}_p$, the inequality $\parallel {\cal K}_q \parallel_{\rho} \geq \parallel {\cal K}_p \parallel_{\rho}$ is always satisfied.
\end{proof}

{By continually applying the result, Proposition} \ref{prop:discovery_inv_space}, {to data across the state space, the Koopman operator corresponding to the complete state space may be computed. When the invariant subspaces are unknown but the data is available, this discovery is especially valuable. In the following section, we will start with the global phase space and show how the spectral properties of the Koopman operator can be utilized to partition the global phase space into invariant subspaces. }

%
\subsection{Invariant Phase Space Decomposition Using Global Koopman Operator}
\label{sec:state_space_partition}
{In this section, we will explore how to use the global Koopman operator to investigate a dynamical system \emph{locally}. For the self-containment of this work, we recollect certain relevant concepts from} \cite{budivsic2012applied,petersen1989ergodic}. {The findings below are presented in terms of a finite dimensional approximate Koopman operator, although they are universal and can be applied to infinite dimensional Koopman operators. Interested readers may refer} \cite{budivsic2012applied,petersen1989ergodic} and the references contained therein for more information.

Suppose $T: {\cal M} \to {\cal M}$ is an ergodic transformation, that is, for any measurable set ${\cal S} \subset {\cal M}$ invariant under $T$, that is,  $T^{-1}({\cal S}) = {\cal S}$, then we either have  $\mu({\cal S}) = 0$ or $\mu({\cal S}) = 1$, and all the eigenvalues of ${\cal K}$ are simple (that is, the algebraic multiplicity, $m_a$ of eigenvalues is 1) \cite{budivsic2012applied,petersen1989ergodic}. However, if $T$ is not ergodic, then the state space can be partitioned into subsets ${\cal S}_i$ (minimal invariant subspaces) such that the restriction $T|_{{\cal S}_i}: {\cal S}_i \to {\cal S}_i$ is ergodic. A partition of the state space into ergodic sets is called an ergodic partition or stationary partition \cite{budivsic2012applied}.

Furthermore, all ergodic partitions are disjoint and they support mutually singular functions from ${\cal G}$ \cite{budivsic2012applied}. Therefore, the number of linearly independent eigenfunctions of ${\cal K}$ corresponding to an eigenvalue $\lambda$ is bounded above by the number of ergodic sets in the state space \cite{budivsic2012applied}. Furthermore, for non-ergodic systems with attractor sets (equilibrium points and limit cycles), the number of unit eigenvalues of $\cal K$ is equal to the number of attractor sets of the underlying system \cite{budivsic2012applied}. With this we have the following.

%
\begin{lemma}
Let $\lambda$ be an eigenvalue of the Koopman operator ${\cal K}$. Suppose the algebraic multiplicity ($m_a$) of the eigenvalue $\lambda$, is equal to the geometric multiplicity ($m_g$), $v$, then the corresponding $v$ eigenfunctions are linearly independent. Moreover the $v$ linearly independent functions map to at most $v$ invariant subspaces in the state space.  
\label{lemma:am_gm} 
\end{lemma}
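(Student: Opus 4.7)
The lemma makes two assertions, so the plan is to dispatch them separately. The first claim, that $m_g = v$ produces $v$ linearly independent eigenfunctions, is essentially a definitional observation: the geometric multiplicity is $\dim \ker({\cal K} - \lambda I)$, which is by definition the maximum size of a linearly independent set of eigenfunctions associated with $\lambda$. I would simply remark that any basis $\{\phi_1,\dots,\phi_v\}$ of the $\lambda$-eigenspace witnesses the claim; the additional hypothesis $m_a = m_g$ ensures that the Jordan block at $\lambda$ is trivial, so every generalized eigenvector is already a genuine eigenvector, but it is not strictly needed for the linear-independence conclusion alone.

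For the second assertion, I would invoke the ergodic-partition framework recalled in the paragraph preceding the lemma. Two facts do the work: (i) the state space admits a partition into ergodic subsets $\{{\cal S}_j\}$ on each of which $T$ restricts to an ergodic transformation, and (ii) on any such ergodic component all eigenvalues of the restricted Koopman operator are simple. Starting from the basis $\{\phi_1,\dots,\phi_v\}$ of the $\lambda$-eigenspace, I would decompose each $\phi_i$ as $\phi_i = \sum_j \phi_i \cdot \mathbf{1}_{{\cal S}_j}$. Because each ${\cal S}_j$ is $T$-invariant, each nonzero restriction $\phi_i \cdot \mathbf{1}_{{\cal S}_j}$ is again a $\lambda$-eigenfunction of ${\cal K}$. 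By regrouping and pruning these restrictions one extracts a new basis of the $\lambda$-eigenspace in which every basis element is supported on a single ergodic subspace. Since there are $v$ such basis elements, at most $v$ distinct ergodic (and hence invariant) subspaces appear as supports, which is precisely the desired bound.

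The main obstacle I anticipate is the measure-theoretic bookkeeping in the regrouping step. One must verify that the truncated functions $\phi_i \cdot \mathbf{1}_{{\cal S}_j}$ remain in ${\cal G} = L_2({\cal M},{\cal B},\mu)$ (immediate, since the ergodic sets are measurable), that they are honest eigenfunctions rather than null almost everywhere, and that after regrouping the resulting collection is still linearly independent and spans the same eigenspace. Once this is done, the counting that bounds the number of associated invariant subspaces by $v$ is immediate; the inequality is ``at most'' rather than ``exactly'' because several basis elements may end up supported on the same ergodic component, particularly in degenerate or symmetric cases where eigenfunctions on a single ergodic set are reshuffled before being truncated.
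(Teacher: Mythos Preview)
Your proposal is correct and follows exactly the route the paper takes: the first claim is dispatched by elementary linear algebra (the paper simply cites Horn--Johnson), and the second by the ergodic-partition material from \cite{budivsic2012applied} that is recalled immediately before the lemma---you have merely unpacked what the paper leaves as a citation. One small wobble: your closing remark that ``several basis elements may end up supported on the same ergodic component'' is inconsistent with the simplicity fact (ii) you invoked earlier, since simplicity forces at most one independent $\lambda$-eigenfunction per ergodic piece; the ``at most'' in the lemma is there only because not every ergodic set need support a $\lambda$-eigenfunction, not because supports can collide.
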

\begin{proof}
Corresponding to the Koopman eigenvalue, $\lambda$, if $m_a = m_g = v$, 
then there exists $v$ linearly independent eigenvectors\footnote{Without loss of generality, the terms eigenfunctions and eigenvectors are used interchangeably in this manuscript unless otherwise mentioned.} (follows from standard results on matrices \cite{horn2012matrix}). 
The rest of the proof follows from \cite{budivsic2012applied}.
\end{proof}


 
\begin{figure}[h]
    \centering
    \includegraphics[width = 0.75 \columnwidth]{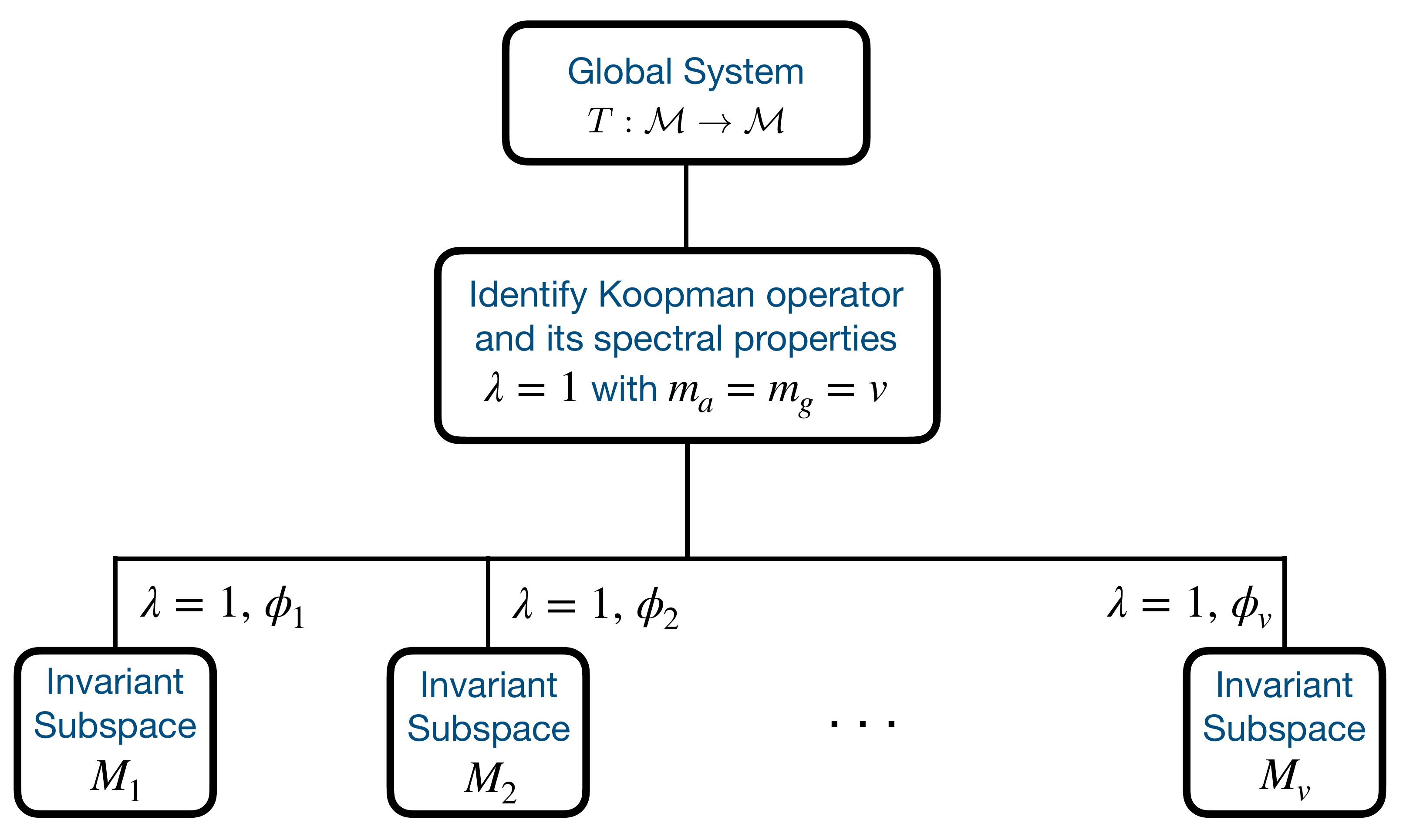}
    \caption{Summary of the invariant phase space decomposition using the global Koopman operator. The nonlinear system is denoted by $T$ and the spectral properties of its corresponding Koopman operator are studied. The eigenvalues of the Koopman operator are denoted by $\lambda$ where the unit eigenvalue appears with $m_a = m_g = v$ and $\phi_1, \phi_2, \dots, \phi_v$ denote the respective eigenfunctions.}
    \label{fig:phase_space_exploration}
\end{figure}
The results discussed in this subsection to identify the invariant subspaces are summarized in Fig. \ref{fig:phase_space_exploration}. The next section describes how the evolution of system on the complete phase space (i.e., a global Koopman operator) can be identified if the system evolution on invariant subspaces is known (i.e., local Koopman operators are known). 

\subsection{Coupling Distinct Phase Representations Assuming Approximate Koopman Operators}
\label{sec:phase_space_stitching}
Consider the dynamical system \eqref{eq:DT_NL_sys} and let $\{M_i\}_{i=1}^v$ be the invariant sets of the dynamical system. For any $M_p$, let 
\[{\Psi}_{p}(x) = \left[\psi_{1}^p(x), \cdots , \psi_{n_p}^p(x)\right] \]
be the set of dictionary functions used for computation of the local Koopman operator ${\cal K}_p$ where $\psi_{j}^p:{\cal M} \to \mathbb{C}$ is a scalar-valued function for every $j \in \{1,2,\dots, n_p\}$. Then these local Koopman operators can be combined to form a single Koopman operator which we refer to as the \textit{stitched Koopman operator} and it is given by 
\begin{align}
     {\cal K}_{\cal S} = \mbox{diag}({\cal K}_{1},{\cal K}_2,\cdots,{\cal K}_{v})
     \label{eq:stitched_Koopman}
\end{align}
with 
\begin{align}
    {\Psi}(x) = \begin{bmatrix} \chi_1(x) { \Psi}_{1}(x) \\ \chi_2(x) { \Psi}_{2}(x) \\ \vdots \\ \chi_{v}(x) { \Psi}_{v}(x) \end{bmatrix}
    \label{eq:stitched_observables}
\end{align}
where $\chi_i(x)$ is the characteristic function corresponding to the (invariant) subspace $M_i$ and it is defined as follows
\begin{align*}
    \chi_i(x) = \begin{cases} 1, & \mbox{if}\;\ x \in M_i \\ 
    0, & \mbox{otherwise}. \end{cases}
\end{align*}
for $i=\{1,\cdots,v\}$. 
%
The order of these local Koopman operators while forming the stitched Koopman operator doesn't matter as long as the local Koopman operators are stacked in accordance with their corresponding observable functions. Note that the intersection of any two invariant sets is a set of measure zero and such sets have not been considered in the above result. The stitched Koopman operator is a block diagonal matrix with ${\cal K}_{\cal S}\in \mathbb{R}^{L\times L}$ where $L = \sum_{i=1}^{v}n_i$. The idea behind the phase space stitching algorithm is depicted in Fig. \ref{fig:phase_space_stitching}. 
\begin{figure}[h!]
    \centering
    \includegraphics[width=0.75 \columnwidth]{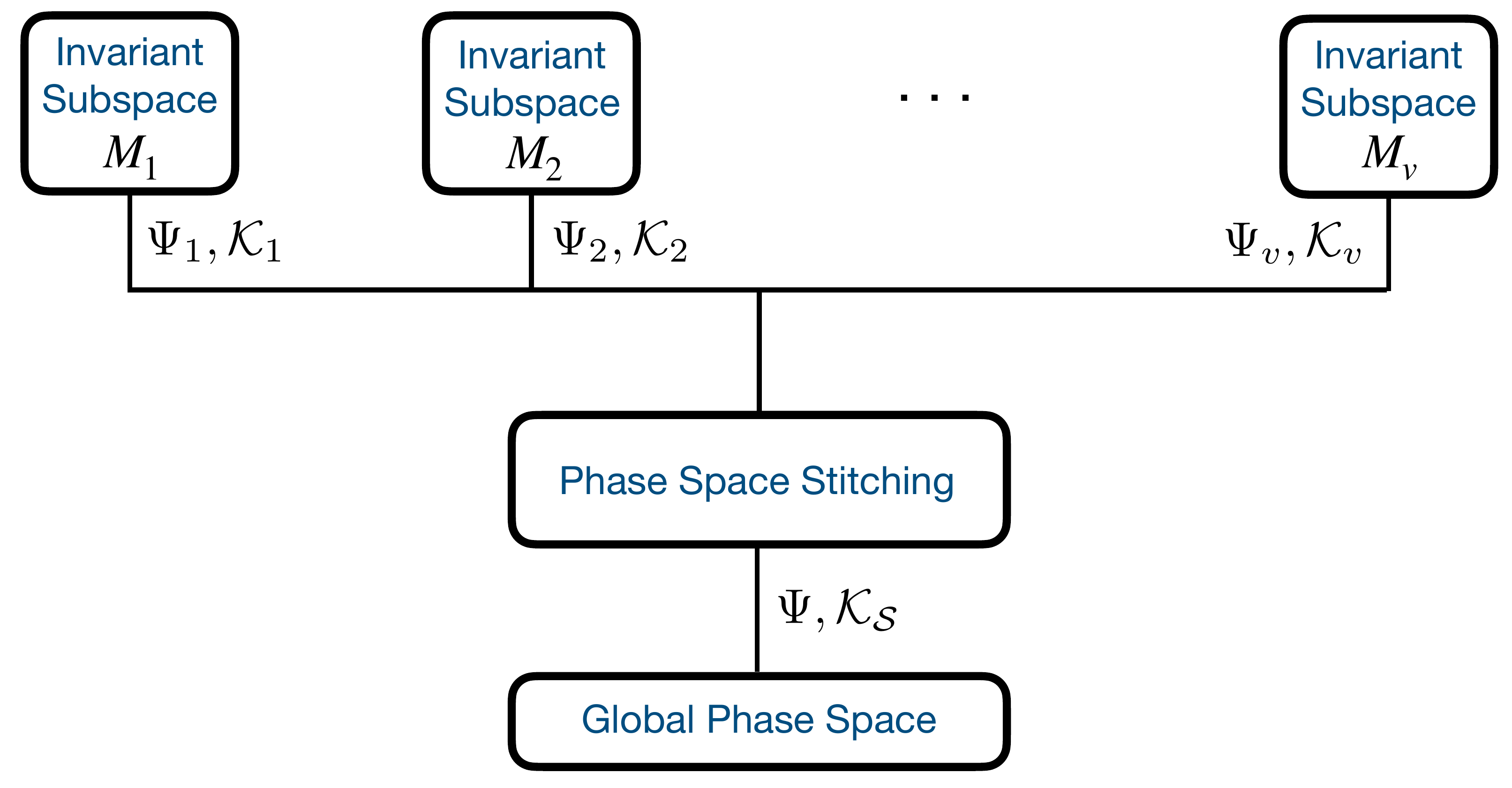}
    \caption{Summary of phase space stitching algorithm using Koopman operators. For every invariant subspace $M_i$, \; $ i \in \{1,2,\dots,v\}$, the corresponding obervable functions and the (local) Koopman operator are denoted by $ \Psi_i, {\cal K}_i$. The local Koopman operators from each invariant subspace are combined to form the stitched Koopman, ${\cal K}_{\cal S}$ with the corresponding observables $ \Psi$ using Algorithm \ref{alg}.}
    \label{fig:phase_space_stitching}
\end{figure}
One of the key differences between identifying the global Koopman operator as described in Section \ref{sec:phase_space_discovery} and \ref{sec:phase_space_stitching} is summarized in the remark given below. 
\begin{remark}
In Section \ref{sec:phase_space_discovery}, the global Koopman operator is identified by starting locally in an invariant subspace and with new time series data, Lemma \ref{lemma_K_learning_error} dictates when a new Koopman operator needs to be identified and Proposition \ref{prop:discovery_inv_space} shows when a larger invariant space is discovered. This process is repeated until the time-series data from the entire space is seen and the corresponding global Koopman operator has been determined. 
This is however different from the (global) stitched Koopman operator (Section \ref{sec:phase_space_stitching}) which assumes knowledge of a local Koopman operator that defines an Koopman invariant subspace for each invariant subspace of the phase space. These results can be readily applied for time-series data generated by multiple experiments as shown in second-order system examples in Section \ref{sec:simulation}. 
\end{remark}

{So far, it has been assumed that every invariant subspace's time-series data is known. This assumption is relaxed in the next subsection thanks to the knowledge of underlying dynamical system's symmetry properties. In particular, we explore the global phase space analysis of a class of dynamical systems with symmetry known as equivariant systems.}

\subsection{Global Koopman Operator for Equivariant Dynamical Systems:}
\label{sec:equivariant_systems}
In this work, we show how to leverage the phase space stitching result to compute the global Koopman for equivariant systems. We begin with the definition of equivariant dynamical systems. algorithm for any system and in particular the equivariant dynamical system. 

\begin{definition}[Equivariant Dynamical System]
Consider the dynamical system \eqref{eq:DT_NL_sys} and let $G$ be a group acting on ${\cal M}$. Then the system  \eqref{eq:DT_NL_sys} is called $G$-equivariant if 
$T(g\cdot x) = g\cdot T(x) \;\text{for}\; g\in G.$
\end{definition}
Koopman operator theory for equivariant systems is introduced in \cite{sinha2020equivariant} and we only recall a key result that is relevant to the global Koopman identification for equivariant systems.  
In an equivariant system, if the finite dimensional approximation of the Koopman operator is known in one invariant subspace, the Koopman operator for other invariant subspaces can be identified provided the symmetry between the invariant subspaces is known.  The ensuing result establishes this. 
\begin{theorem}\label{K_i_K_j_theorem}
Let ${\cal M}_p$ be an invariant set of the $G$-equivariant system (\ref{eq:DT_NL_sys}) and let ${\cal K}_p\in\mathbb{R}^{n_p\times n_p}$ be the local Koopman operator on ${\cal M}_p$ with dictionary function ${\Psi}_p(x)$, $x\in{\cal M}_p$. Suppose for $g \in G$, $g \cdot {\cal M}_p\subset {\cal M}_q$ and ${\cal K}_q$ be the local Koopman operator on ${\cal M}_q$ with dictionary functions ${\Psi}_q={\Psi}_p$. Then 
\[{\cal K}_p = \gamma {\cal K}_q \gamma^{-1},\]
where $g \mapsto \gamma \in \Gamma$ and $\Gamma$ is the $n_p$ dimensional matrix representation of $G$ in $\mathbb{R}^{n_p}$.
\end{theorem}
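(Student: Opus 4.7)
The plan is to chain together three ingredients: (i) the local Koopman relations on $\mathcal{M}_p$ and $\mathcal{M}_q$, (ii) the $G$-equivariance of $T$, and (iii) the hypothesis that $\gamma$ is the $n_p$-dimensional matrix representation of $g$ acting on the span of the shared dictionary $\Psi_p = \Psi_q =: \Psi$. The conclusion $\mathcal{K}_p = \gamma \mathcal{K}_q \gamma^{-1}$ should then drop out by expanding $\Psi(T(g \cdot x))$ in two different ways and invoking enough richness of $\Psi$ to cancel it from both sides.

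More concretely, I would first record the defining identity of each local Koopman operator in the row-vector convention used earlier in the paper: $\Psi(T(x)) = \Psi(x)\,\mathcal{K}_p$ for every $x \in \mathcal{M}_p$, and $\Psi(T(y)) = \Psi(y)\,\mathcal{K}_q$ for every $y \in \mathcal{M}_q$. Next I would make precise what it means for $\Gamma$ to be a matrix representation of $G$ on the dictionary: because $\mathrm{span}(\Psi)$ is assumed Koopman-invariant and $g \cdot \mathcal{M}_p \subset \mathcal{M}_q$, each component of $x \mapsto \Psi(g\cdot x)$ lies in $\mathrm{span}(\Psi)$, so there exists a unique invertible $\gamma \in \mathbb{R}^{n_p \times n_p}$ with
\begin{equation*}
\Psi(g \cdot x) = \Psi(x)\,\gamma \qquad \text{for all } x \in \mathcal{M}_p.
\end{equation*}

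Now for any $x \in \mathcal{M}_p$ I would compute $\Psi\bigl(T(g\cdot x)\bigr)$ in two ways. On the one hand, applying the Koopman identity on $\mathcal{M}_q$ at the point $g \cdot x$ gives $\Psi(T(g\cdot x)) = \Psi(g\cdot x)\,\mathcal{K}_q = \Psi(x)\,\gamma\,\mathcal{K}_q$. On the other hand, equivariance $T(g\cdot x) = g\cdot T(x)$ together with the representation identity and the Koopman identity on $\mathcal{M}_p$ gives $\Psi(T(g \cdot x)) = \Psi(g \cdot T(x)) = \Psi(T(x))\,\gamma = \Psi(x)\,\mathcal{K}_p\,\gamma$. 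Equating the two expressions yields $\Psi(x)\,(\mathcal{K}_p \gamma - \gamma \mathcal{K}_q) = 0$ for every $x \in \mathcal{M}_p$, so assuming the vectors $\{\Psi(x) : x \in \mathcal{M}_p\}$ span $\mathbb{R}^{n_p}$ (the usual non-degeneracy hypothesis behind EDMD-type constructions), we conclude $\mathcal{K}_p \gamma = \gamma \mathcal{K}_q$, hence $\mathcal{K}_p = \gamma\,\mathcal{K}_q\,\gamma^{-1}$.

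The main obstacle I anticipate is step (iii): the clean statement $\Psi(g\cdot x) = \Psi(x)\gamma$ presumes that the Koopman-invariant subspace spanned by $\Psi$ is also $G$-invariant, which is not logically automatic from the bare hypotheses but is implicit in the phrase ``$\Gamma$ is the $n_p$-dimensional matrix representation of $G$ in $\mathbb{R}^{n_p}$.'' A secondary technical point is the spanning/injectivity assumption used to cancel $\Psi(x)$; this is consistent with the standing assumption that the local Koopman operators are the (essentially unique) operators associated to the chosen dictionary on each invariant subspace. Once those two points are made explicit, the remaining manipulation is a single short computation.
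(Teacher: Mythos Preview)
Your argument is correct and is exactly the standard computation one would expect: expand $\Psi(T(g\cdot x))$ once via the $\mathcal{M}_q$-Koopman relation and once via equivariance plus the $\mathcal{M}_p$-Koopman relation, then cancel. The two caveats you flag (that $\mathrm{span}\,\Psi$ must be $G$-invariant so that $\Psi(g\cdot x)=\Psi(x)\gamma$ holds, and that $\{\Psi(x):x\in\mathcal{M}_p\}$ must span $\mathbb{R}^{n_p}$ to justify the cancellation) are real and are implicit in the theorem statement; you are right to make them explicit.

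As for comparison with the paper: the paper does not actually present a proof of this theorem. Its entire proof reads ``We refer the readers to \cite{sinha2020equivariant} for the proof.'' So there is no in-paper argument to compare against. Your derivation is the natural one and almost certainly coincides with what appears in the cited reference.
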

\begin{proof}
We refer the readers to \cite{sinha2020equivariant} for the proof. 
\end{proof}

Under the assumption that the symmetry of the system is known, the local Koopman operator in every invariant subspace of the system can be identified starting with the knowledge of time-series data in only one invariant subspace. This idea of computing the global Koopman for equivariant systems is portrayed in Fig. \ref{fig:phase_space_stitching_equivariant}. In the following, we present the phase space stitching result as an algorithm for any system and in particular the equivariant dynamical system. 

\begin{figure}[h!]
    \centering
    \includegraphics[width=0.75 \columnwidth]{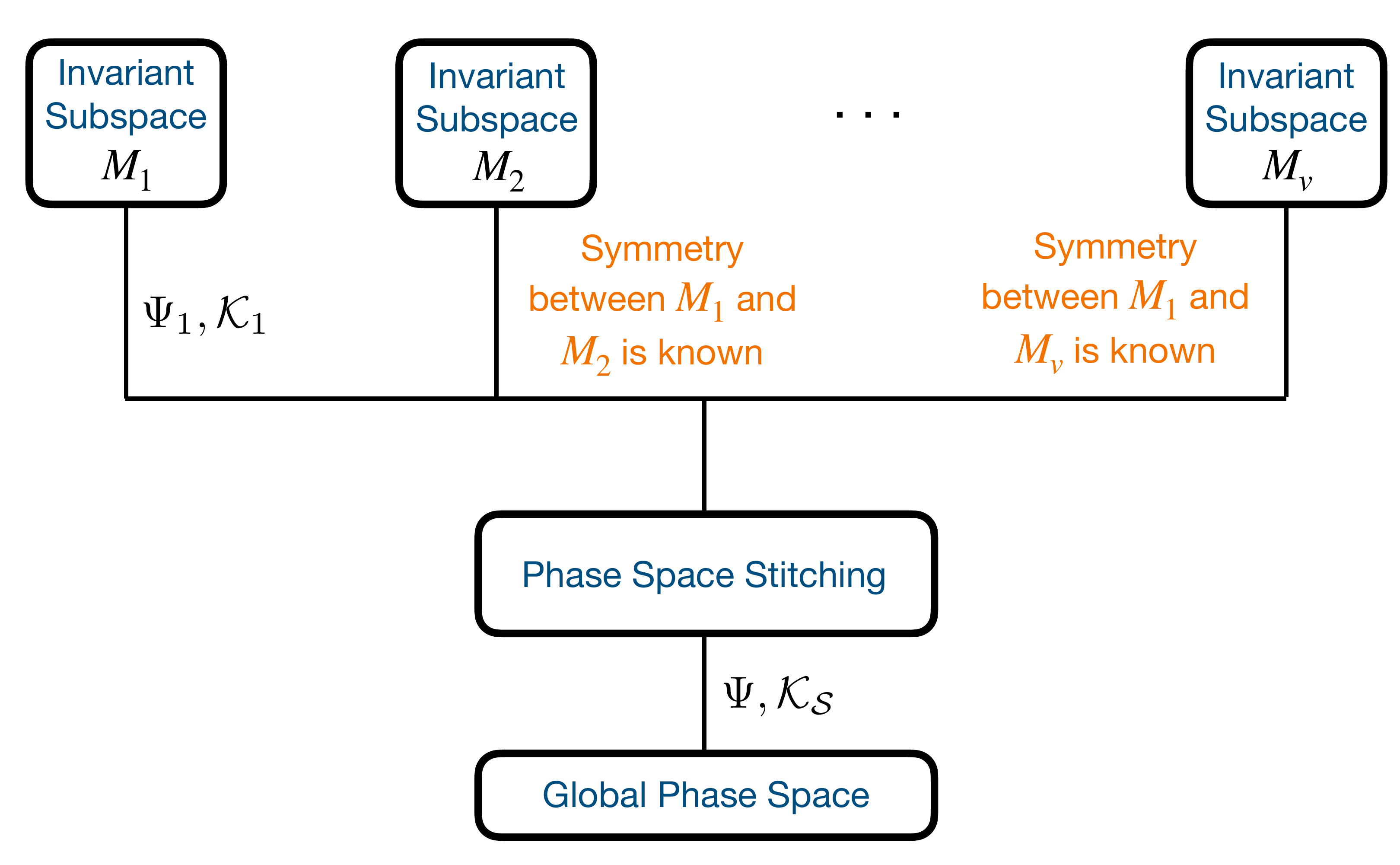}
    \caption{Phase space stitching for equivariant systems. The symmetry information between the invariant subspaces $M_i$, \; $ i \in \{1,2,\dots,v\}$ is known and the local Koopman operator is known only in $M_1$. Theorem \ref{K_i_K_j_theorem} is applied to identify the local Koopman operator in every invariant subspace and they are now combined to form the stitched Koopman, ${\cal K}_{\cal S}$ with the corresponding observables $ \Psi$ using Algorithm \ref{alg}.}
    \label{fig:phase_space_stitching_equivariant}
\end{figure}


\begin{algorithm}[htp!]
\caption{Phase space stitching to compute global Koopman operator:}
\begin{enumerate}
\item \textbf{Input:} For every invariant subspace ${\cal M}_p$, where $i\in \{1,2,\hdots,v\}$, the observables $ \Psi_p(x)$ and their corresponding approximate Koopman operator ${\cal K}_p$ 
\begin{itemize}
    \item \textit{For an equivariant system}: Only one invariant subspace (${\cal M}_p$), its corresponding observable functions ($ \Psi_p(x)$), the approximate local Koopman operator (${\cal K}_p$) and the symmetry between the given invariant subspace with the other invariant subspaces. 
\end{itemize}
%
%
\item Define the observables for the entire space (obtained from the union of all invariant subspaces) and form the stitched Koopman operator
 \begin{align*}
    {\Psi}(x) = \begin{bmatrix} \chi_1(x) { \Psi}_{1}(x) \\ \chi_2(x) {\Psi}_{2}(x) \\ \vdots \\ \chi_{v}(x) { \Psi}_{v}(x) \end{bmatrix}, \qquad {\cal K}_{\cal S} = \mbox{diag}({\cal K}_{1},{\cal K}_2,\cdots,{\cal K}_{v})
\end{align*}
\begin{itemize}
    \item For an equivariant system, the local Koopman operators corresponding to each invariant subspace are computed applying Theorem \ref{K_i_K_j_theorem}. 
\end{itemize}
%
\item \textbf{Output:} The stitched Koopman now gives the global approximate Koopman operator. 
\end{enumerate}
\label{alg}
\end{algorithm}

{In this section, we explored the phase space of a dynamical system using the spectral properties of the approximate Koopman operator to discover invariant subspaces, partition the global phase space into multiple invariant subspaces, and fuse local Koopman operators to construct the global Koopman. However, in the following section, we explore the phase space of one of two topologically conjugate dynamical systems given knowledge of the phase space of the other.}


\section{Topologically Conjugate Dynamical Systems}
\label{sec:TC_systems}
If the global phase space has a topological conjugacy, then the global Koopman for the topological conjugate system may be determined, as well as the system's characteristics such as the number of attractors.
We begin with the definition of topologically conjugate dynamical systems. 
\begin{definition}[Topological Conjugacy]
Let $T_1:{\cal M} \to {\cal M}$ and $T_2:{\cal N} \to {\cal N}$ describe two nonlinear dynamical systems. Then $T_1$ and $T_2$ are topologically conjugate if and only if there exists a homeomorphism $h:{\cal N} \to {\cal M}$ such that  $T_1 = h \circ T_2 \circ h^{-1}$. Then the homeomorphism $h$ is called a topological conjugacy between $T_1$ and $T_2$. 
\end{definition}
The following result establishes the relation between the observables of topologically conjugate systems. 
%

\begin{theorem}
Let $T_1:{\cal M} \to {\cal M}$ and $T_2:{\cal N} \to {\cal N}$ be two topologically conjugate dynamical systems with the homeomorphism $h:{\cal N} \to {\cal M}$. Let $\Psi$ be the basis function on $\cal M$, such that $\mathbb{U}_{\Psi}$ is the representation of the corresponding Koopman operator for $(T_1,{\cal M})$ with respect to $\Psi$. Suppose if the observables on $(T_2,{\cal N})$ are $\Theta = {\Psi}\circ h$, then the representation of the Koopman operator for $T_2$ with respect to $\Theta$ (denoted  by $\mathbb{U}_{\Theta}$) satisfies $\mathbb{U}_{\Psi}=\mathbb{U}_{\Theta}$.
\label{thm:observable_functions_TC}
\end{theorem}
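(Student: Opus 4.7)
The plan is to unpack the claim directly from the defining identity of the Koopman operator's matrix representation and the conjugacy relation $T_1 = h\circ T_2 \circ h^{-1}$, which rearranges to $h\circ T_2 = T_1\circ h$. With the paper's convention established in Section~\ref{sec:DMD_variants} (rows of $\Psi$, Koopman matrix acting on the right), the matrix representation $\mathbb{U}_\Psi$ on $(\mathcal{M},T_1)$ is characterized by
\[
\Psi(T_1(x)) = \Psi(x)\,\mathbb{U}_\Psi \qquad \text{for all } x\in\mathcal{M},
\]
and analogously $\Theta(T_2(y)) = \Theta(y)\,\mathbb{U}_\Theta$ for all $y\in\mathcal{N}$. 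So the proof reduces to a change-of-variables computation.

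First, I would substitute $\Theta = \Psi\circ h$ into the left-hand side of the $T_2$ identity and chase the composition using the conjugacy:
\[
\Theta(T_2(y)) \;=\; \Psi\bigl(h(T_2(y))\bigr) \;=\; \Psi\bigl(T_1(h(y))\bigr) \;=\; \Psi(h(y))\,\mathbb{U}_\Psi \;=\; \Theta(y)\,\mathbb{U}_\Psi .
\]
Combining this with $\Theta(T_2(y)) = \Theta(y)\,\mathbb{U}_\Theta$ gives
\[
\Theta(y)\,\bigl(\mathbb{U}_\Theta - \mathbb{U}_\Psi\bigr) \;=\; 0 \qquad \text{for all } y\in\mathcal{N}.
\]

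The remaining step is to cancel $\Theta(y)$ to conclude $\mathbb{U}_\Theta = \mathbb{U}_\Psi$, which is the only place where genuine care is needed. Specifically, I need the row vectors $\{\Theta(y):y\in\mathcal{N}\}$ to span $\mathbb{C}^{1\times N}$, equivalently that the components $\psi_j\circ h$ of $\Theta$ are linearly independent as scalar functions on $\mathcal{N}$. This is where the hypothesis that $\Psi$ is a \emph{basis} on $\mathcal{M}$ and that $h$ is a \emph{homeomorphism} (in particular a bijection $\mathcal{N}\to\mathcal{M}$) is essential: any nontrivial linear combination $\sum_j c_j(\psi_j\circ h)$ vanishing on $\mathcal{N}$ would, via the bijection $h$, produce the same nontrivial combination $\sum_j c_j \psi_j$ vanishing on $\mathcal{M}$, contradicting linear independence of $\Psi$. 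Stacking finitely many such rows $\Theta(y_1),\dots,\Theta(y_N)$ chosen so that the resulting $N\times N$ matrix is invertible then cancels $\Theta$ and yields the claimed equality $\mathbb{U}_\Psi = \mathbb{U}_\Theta$.

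The main obstacle is purely bookkeeping: ensuring the index convention for the matrix representation (left vs.\ right action) is consistent with the paper's Section~\ref{sec:DMD_variants} so that the composition identity $\Psi(T_1(x)) = \Psi(x)\mathbb{U}_\Psi$ is used with the correct side, and articulating cleanly why $\Theta$ inherits linear independence from $\Psi$. No approximation or residual argument enters; the identity is exact once the conjugacy is pushed through the observable.
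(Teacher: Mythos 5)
Your proof follows essentially the same route as the paper's: push the conjugacy $h\circ T_2 = T_1\circ h$ through the observable $\Theta=\Psi\circ h$ to obtain $\Theta(T_2(y))=\Theta(y)\,\mathbb{U}_\Psi$ and match this against the defining relation $\Theta(T_2(y))=\Theta(y)\,\mathbb{U}_\Theta$. The only difference is that you explicitly justify the final cancellation of $\Theta(y)$ (linear independence of the components $\psi_j\circ h$, inherited from the basis property of $\Psi$ through the bijection $h$), a step the paper dispatches with the bare remark that $\Psi(x)=\Theta(y)$; your version is the more careful one.
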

\begin{proof}
Given a $x \in {\cal M}$ and $y \in {\cal N}$, then they are related by the homeomorphism as $x = h(y)$. Suppose $\mathbb{U}_{\Psi}$ be the Koopman operator trained on the observables $\Psi$ defined on the state space of $T_1$. 
Then, we have
\begin{align*}
    [\mathbb{U}_{\Psi} \Psi](x) & = \Psi(T_1(x)) = \Psi(T_1(h(y))) = \Psi(h(T_2(y))) = [\mathbb{U}_{\Theta} (\Psi \circ h)](y) = [\mathbb{U}_{\Theta} \Theta](y)
\end{align*}
where $\mathbb{U}_{\Theta}$ is the Koopman operator corresponding to the system $T_2$ defined on the space of observables given by $\Theta = \Psi \circ h$. Since we have $\Psi(x) = \Psi(h(y)) = \Theta(y)$, we obtain $\mathbb{U}_{\Psi} = \mathbb{U}_{\Theta}$ and hence the proof. 
\end{proof}



\begin{figure}[h!]
    \centering
    \includegraphics[width = 0.95 \textwidth]{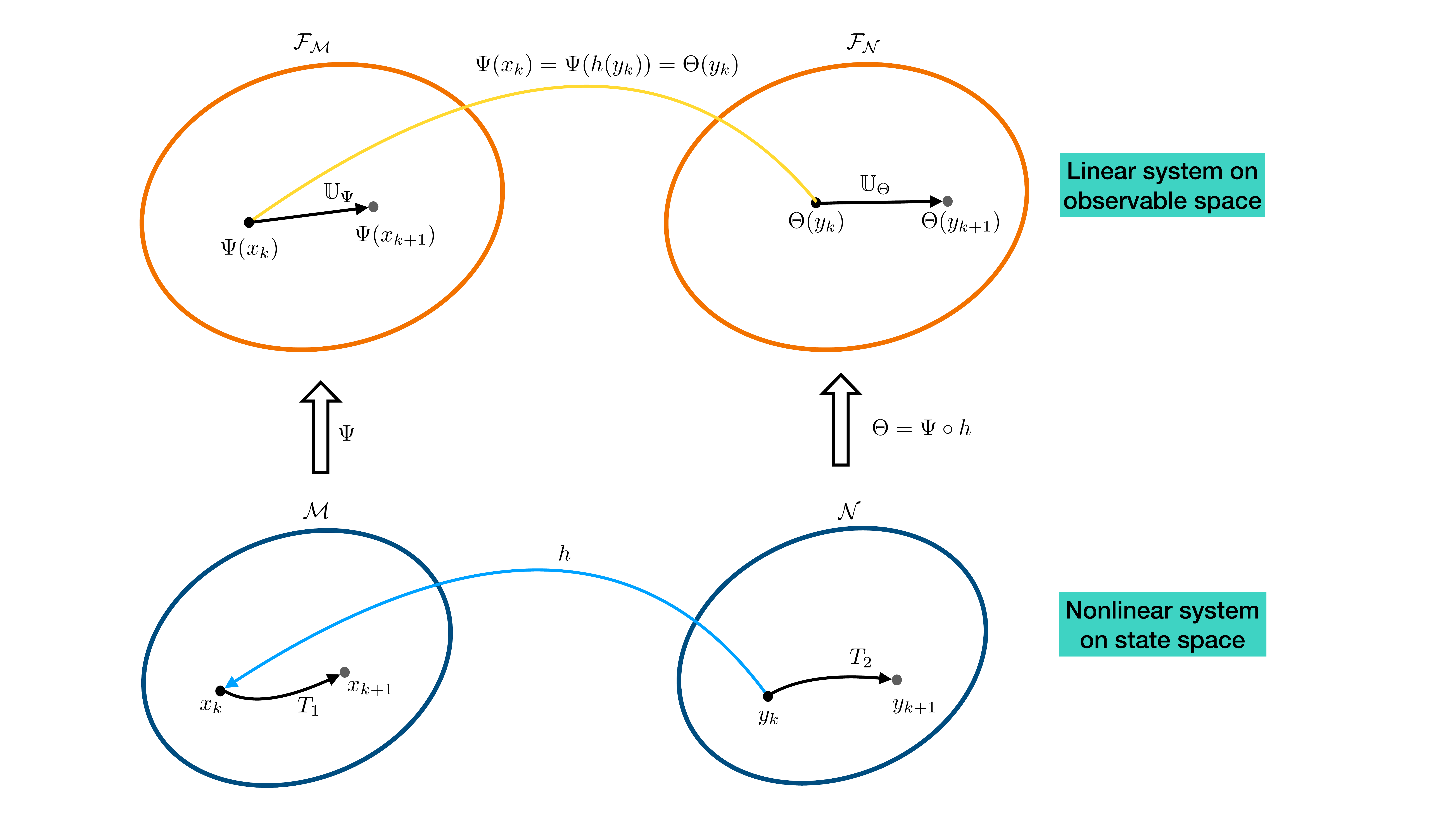}
    \caption{Overview of the study of topologically conjugate systems using Koopman operator theory. The bottom sketch shows the evolution of nonlinear system on the state space and how homeomorphic function connects the systems $T_1$ and $T_2$. The top sketch illustrates the evolution of the system on the space of observables given by the Koopman operator and shows the relation between the spaces ${\cal F}_{\cal M}$ and ${\cal F}_{\cal N}$. The space ${\cal F}_{\cal M}$ and ${\cal F}_{\cal N}$ denote the space of observable functions defined on ${\cal M}$ and ${\cal N}$ respectively.}
    \label{fig:tc_overview}
\end{figure}

The above theorem  shows how the Koopman operators corresponding to the topologically conjugate systems are related in the special case where the observables are defined using the homeomorphism as shown in Theorem \ref{thm:observable_functions_TC}. However, in general a natural question that arises here is to understand the relation between the Koopman eigenvalues and Koopman eigenfunctions for the topologically conjugate systems. The following result formalizes this relation. 
%


\begin{proposition}
Suppose $T_1:{\cal M} \to {\cal M}$ and $T_2:{\cal N} \to {\cal N}$ describe two nonlinear dynamical systems that are topologically conjugate with the homeomorphism $h:{\cal N} \to {\cal M}$. The corresponding Koopman operators on $T_1$ and $T_2$ are given by $\mathbb{U}_{1}$ and $\mathbb{U}_{2}$. Then the eigenvalues and eigenfunctions of $\mathbb{U}_1$ and $\mathbb{U}_2$ are in one-to-one correlation with one another via the homeomorphism $h$.
\label{prop:eigenfunctions_TC}
\end{proposition}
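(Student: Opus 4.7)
The plan is to construct the one-to-one correspondence explicitly via pullback by $h$, then verify that it preserves eigenvalues and is a bijection. Concretely, I would define the map $\Lambda: \phi \mapsto \phi \circ h$ sending observables on $\mathcal{M}$ to observables on $\mathcal{N}$, and show that $\Lambda$ restricts to a bijection between eigenfunctions of $\mathbb{U}_1$ and eigenfunctions of $\mathbb{U}_2$, with the eigenvalue preserved.

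First I would handle the forward direction. Let $\phi:\mathcal{M}\to\mathbb{C}$ be an eigenfunction of $\mathbb{U}_1$ with eigenvalue $\lambda$, so that $\phi(T_1(x)) = \lambda\,\phi(x)$ for every $x\in\mathcal{M}$. Setting $\tilde\phi := \phi\circ h$ and using the conjugacy relation $T_1\circ h = h\circ T_2$, I compute
\begin{equation*}
[\mathbb{U}_2 \tilde\phi](y) = \tilde\phi(T_2(y)) = \phi(h(T_2(y))) = \phi(T_1(h(y))) = \lambda\,\phi(h(y)) = \lambda\,\tilde\phi(y),
\end{equation*}
so $\tilde\phi$ is an eigenfunction of $\mathbb{U}_2$ with the same eigenvalue $\lambda$. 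For the reverse direction I would run the same argument using $h^{-1}$ in place of $h$, noting that topological conjugacy is symmetric ($T_2 = h^{-1}\circ T_1\circ h$), to show that every eigenfunction $\tilde\phi$ of $\mathbb{U}_2$ pulls back via $\tilde\phi\mapsto \tilde\phi\circ h^{-1}$ to an eigenfunction of $\mathbb{U}_1$ with the same eigenvalue.

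Next I would verify that these two pullback maps are mutual inverses: $(\phi\circ h)\circ h^{-1} = \phi$ and $(\tilde\phi\circ h^{-1})\circ h = \tilde\phi$, since $h$ is a homeomorphism. This establishes the bijection at the level of eigenfunctions. Because the eigenvalue is preserved in both directions, the induced map on spectra is also a bijection, which yields the claimed one-to-one correspondence.

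The only subtle point, rather than a genuine obstacle, is the function-space bookkeeping: one must argue that $\phi\circ h$ actually lies in the observable space $\mathcal{G}_{\mathcal{N}}$ on which $\mathbb{U}_2$ is defined (and symmetrically for $h^{-1}$). Since $h$ is a homeomorphism, composition with $h$ is a linear isomorphism between $C(\mathcal{M})$ and $C(\mathcal{N})$, and if one works in $L^2$ with appropriately pushed-forward measures, the same holds there; I would mention this briefly to justify that $\Lambda$ is well-defined as a map between the ambient observable spaces, after which the computation above carries the argument.
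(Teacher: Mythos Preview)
Your proposal is correct and follows essentially the same approach as the paper: both compute $[\mathbb{U}_2(\phi\circ h)](y)=\phi(h(T_2(y)))=\phi(T_1(h(y)))=\lambda(\phi\circ h)(y)$ and then invoke the symmetric argument via $h^{-1}$ for the reverse direction. Your version is in fact slightly more thorough, since you explicitly verify that the two pullback maps are mutual inverses and flag the function-space compatibility issue, whereas the paper leaves these points implicit.
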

\begin{proof}
Let $\Lambda_k$ and $F_{k}$ respectively denote the set of eigenvalues and set of eigenfunctions respectively for $\mathbb{U}_{k}$ where $k = \{1,2\}$. Suppose $x \in {\cal M}$,  $y \in {\cal N}$, and they are related by the homeomorphism by $x = h(y)$.  
%

Consider an eigenvalue $\lambda \in \Lambda_1$, and the corresponding eigenfunction $\phi \in F_1$. Then, 
\begin{align*}
     \lambda (\phi \circ h) (y) = \lambda \phi(x) = [\mathbb{U}_{1} \phi](x) = \phi(T_1(x)) = \phi(T_1(h(y))) = \phi(h(T_2(y))) = [\mathbb{U}_{2} (\phi \circ h)](y) 
\end{align*}
%
%
It now follows that $\lambda \in \Lambda_2$ and $\phi \circ h \in F_2$. Similarly for $\bar{\lambda} \in \Lambda_2$ and $\bar{\phi} \in F_2$, it can be shown that, $\bar{\lambda} \in \Lambda_1$ and $h^{-1} \circ \bar{\phi} \in F_1$ and hence it can be seen that the eigenvalues and eigenfunctions for topologically conjugate systems are in one-to-one correlation. 
\end{proof}

Note that the above result under the implicit assumption of eigenvalues being same for the Koopman operators corresponding to topologically conjugate systems is presented in \cite{budivsic2012applied}. 
%
%
More generally, in Proposition \ref{prop:eigenfunctions_TC}, we establish the relation between the eigenvalues and the eigenfunctions of the topologically conjugate systems. The ensuing result shows that the Koopman modes for both the topologically conjugate systems remain same. 
\begin{proposition}
Suppose $T_1:{\cal M} \to {\cal M}$ and $T_2:{\cal N} \to {\cal N}$ describe two nonlinear dynamical systems that are topologically conjugate with the homeomorphism $h:{\cal N} \to {\cal M}$. Then the Koopman modes for both the topological conjugate systems are same. 
\label{prop:Koopman_modes_TC}
\end{proposition}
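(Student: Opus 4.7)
The plan is to mirror the argument used in Proposition \ref{prop:eigenfunctions_TC}, exploiting the one-to-one correspondence between Koopman eigenfunctions of $\mathbb{U}_1$ and $\mathbb{U}_2$ via the homeomorphism $h$. The key is to pair a vector-valued observable $f:\mathcal{M}\to\mathbb{C}^m$ on the $T_1$ side with its natural counterpart $g:\mathcal{N}\to\mathbb{C}^m$ on the $T_2$ side, defined by $g = f\circ h$. This is the right pairing because it is precisely the substitution rule induced by the conjugacy, and it is also the pairing implicit in Theorem \ref{thm:observable_functions_TC}, where observables on $\mathcal{N}$ were taken to be $\Theta = \Psi\circ h$.

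First I would write the Koopman mode decomposition of $f$ on the $T_1$ side, assuming each component $f_j$ lies in the span of the eigenfunctions $\{\phi_i\}$ of $\mathbb{U}_1$:
\begin{equation*}
f(x) \;=\; \sum_{i=1}^{\infty} \phi_i(x)\, \vartheta_i,\qquad x\in\mathcal{M},
\end{equation*}
where the vector coefficients $\vartheta_i\in\mathbb{C}^m$ are, by definition, the Koopman modes of $(T_1,f)$. Next, I would evaluate $g=f\circ h$ at an arbitrary point $y\in\mathcal{N}$ and substitute $x=h(y)$:
\begin{equation*}
g(y) \;=\; f(h(y)) \;=\; \sum_{i=1}^{\infty} \phi_i(h(y))\, \vartheta_i \;=\; \sum_{i=1}^{\infty} (\phi_i\circ h)(y)\, \vartheta_i.
\end{equation*}

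By Proposition \ref{prop:eigenfunctions_TC}, the functions $\phi_i\circ h$ are precisely the eigenfunctions of $\mathbb{U}_2$ (associated with the same eigenvalue $\lambda_i$ as $\phi_i$ on the $T_1$ side), and they enumerate the point spectrum of $\mathbb{U}_2$ bijectively. Hence the right-hand side of the last display is itself a Koopman mode decomposition of $g$ under $T_2$, and by identifying it with the (unique) KMD expansion
\begin{equation*}
g(y) \;=\; \sum_{i=1}^{\infty} (\phi_i\circ h)(y)\, \tilde{\vartheta}_i
\end{equation*}
we conclude $\tilde{\vartheta}_i = \vartheta_i$ for every $i$, which is the desired statement.

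The main obstacle I anticipate is the uniqueness of the KMD expansion: equating the coefficients of the two series requires the family $\{\phi_i\circ h\}$ to be linearly independent in the relevant function space, which follows from the one-to-one correspondence established in Proposition \ref{prop:eigenfunctions_TC} together with the fact that $h$ is a homeomorphism (so that $h^{-1}$ exists and pulls back any linear relation among the $\phi_i\circ h$ to one among the $\phi_i$). I would state this briefly as a remark rather than develop it fully, since it is consistent with the level of rigor of the surrounding results; the essence of the proposition is simply that the conjugacy $h$ only relabels the state variable, leaving the spectral ``amplitudes'' $\vartheta_i$ untouched.
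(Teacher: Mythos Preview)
Your proposal is correct and follows essentially the same argument as the paper: write the KMD of $f$ on $\mathcal{M}$, substitute $x=h(y)$, and recognize the resulting expansion as the KMD of $f\circ h$ on $\mathcal{N}$ via Proposition \ref{prop:eigenfunctions_TC}, so the coefficients $\vartheta_i$ are unchanged. The only cosmetic difference is that the paper writes the KMD in its time-indexed form $f(x_t)=\sum_j \lambda_j^t\phi_j(x_0)\vartheta_j$ before substituting, whereas you use the static form $f(x)=\sum_i\phi_i(x)\vartheta_i$; your brief remark on uniqueness is an addition the paper does not make explicit.
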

\begin{proof}
Let $x_0 \in {\cal M}$, $y_0 \in {\cal N}$ and they are related to each other through the homemorphic function, such that, $x_0 = h(y_0)$. 
Suppose ${\cal G}$ be the space of scalar-valued observable functions defined on the state space ${\cal M}$. Let ${f} = \left[f_1 \; f_2 \; \hdots \; f_m \right]$ be a vector-valued observable function where each $f_i \in {\cal G}$ for $i\in \{1,2,\dots, m\}$, then ${f}$ can be expressed in terms of the Koopman tuple as follows. 
\begin{align}
    {f}(x_t) = & \sum_{j=1}^{\infty} \lambda_j^t \phi_j(x_0) {\vartheta}_j \quad \mbox{for} \; x_t \in {\cal M} 
    \label{eq:KMD_expansion_TC}
\end{align}
The KMD shown in Eq. \eqref{eq:KMD_expansion_TC} can be rewritten as 
\begin{align*}
    {f}(h(y_t)) = & \sum_{j=1}^{\infty} \lambda_j^t \phi_j(h(y_0)) {\vartheta}_j \\
    ({f}\circ h)(y_t) = & \sum_{j=1}^{\infty} \lambda_j^t (\phi_j \circ h) (y_0) {\vartheta}_j
\end{align*}
From Theorem \ref{thm:observable_functions_TC} and Proposition \ref{prop:eigenfunctions_TC}, we know that the observables for the topologically conjugate system are given by ${f}\circ h$ and the eigenfunction for the topologically conjugate system is given by $\phi \circ h$. Therefore, the Koopman modes, ${\vartheta}_j$ remain same for both the topologically conjugate systems.   
\end{proof}

Note the Koopman modes in Eq. \eqref{eq:KMD_expansion_TC} is a function of the choice of the vector-valued observables, $ f$. 
{We observe that if time-series data from one system and its topological conjugacy are known, the global phase space of the topologically conjugate system can be investigated, that is, the number of invariant subspaces, attractor sets, spatiotemporal modes, and so on.
It's important to note that the symmetry properties of an equivariant system are shared by the system's invariant subspaces, while topological conjugacy is shared between separate systems. A special case arises in an equivariant system if each of the invariant subspaces can be related by a homeomorphic function with other invariant subspaces. We may thus claim that the invariant subspaces are topologically conjugate in this scenario.}
%
%
%

\section{Simulation Study}
\label{sec:simulation}
We consider several second-order nonlinear dynamical systems in this section and use the results from Section \ref{sec:global_phase_space} and Section \ref{sec:TC_systems} to demonstrate the invariant phase space identification, phase space stitching, global Koopman operator computation with and without symmetric properties and phase space identification of topologically conjugate dynamical systems. The phase space study of the bistable toggle switch comes first.

%
%
\subsection{Bistable Toggle Switch}
The genetic toggle switch is a seminal memory device developed by Collins and Gardner \cite{gardner2000construction} to simulate binary logic and memory inside of living cells.  The design and analysis of a toggle switch model has been the subject of many studies \cite{tian2006stochastic,munsky2010guidelines,yeung2021data}.  In our case, we use the toggle switch as simple example of a two state nonlinear system with multiple equilibria and a non-trivial invariant subspace partition.  In it's most basic form, the toggle switch consists of two biological states, usually proteins, that repress or attenuate each other to apply mutual negative feedback.  The toggle switch's dynamics can be described by the following governing equations \cite{gardner2000construction}:
\begin{equation}
\begin{aligned}
\dot{x}_1 = & \frac{\alpha_1}{1+x_2^{\beta}} - \kappa_1 x_1 \\
\dot{x}_2 = & \frac{\alpha_2}{1+x_1^{\gamma}} - \kappa_2 x_2
\end{aligned}
\label{eq:bistable_toggle_switch}
\end{equation}
where the states $x_1, x_2 \in \mathbb{R}$ indicate the concentration of the repressors $1$ and $2$. The effective rate of synthesis for repressors 1 and 2 are denoted by $\alpha_1$ and $\alpha_2$. The self decay rates of the concentration of repressors 1 and 2 are  given by $\kappa_1>0$ and $\kappa_2>0$. The cooperativity of repression of promoter 1 and 2 are respectively $\gamma$ and $\beta$. The bistable toggle switch is mathematically modelled to investigate a bistable gene-regulatory network \cite{gardner2000construction}. There are two stable equilibrium points in this system (indicating bistability).  When the parameters like $\alpha_1, \alpha_2$ for gene concentrations, $x_1, x_2$, are different, a monostable equilibrium is observed \cite{gyorgy2016quantifying}. 

%

\textbf{Data Generation:}  
We consider $81$ initial conditions in the global state space and for each initial condition, system is evolved for $21$ time points. The complete dataset is considered and the centers ($c_i$) of each cluster are determined using \textit{k-means} clustering. The Gaussial radial basis functions are then defined using the centers, $c_i$. As indicated in Eq. \eqref{edmd_op}, the related Koopman operator is computed using the EDMD algorithm by minimizing the residue function. 
The size of the dictionary is chosen to be $30$ and each dictionary function is of the form, $\psi_i(x) = \exp(-{\parallel x - c_i \parallel^2/\sigma^2})$ for $i\in\{1,2,\dots, 30\}$ where $\sigma$ is chosen to be $0.4$. Figure \ref{fig:bistable_phase_portrait} shows the phase portrait of this system with two stable equilibrium points (i.e., two stable attractors). The equilibrium points for the bistable toggle switch system are given by $(2,0.16)$ and $(0.16, 2)$. 
\vspace{-0.3 cm}
\begin{figure}[h!]
\begin{center}
\includegraphics[width=0.5 \textwidth]{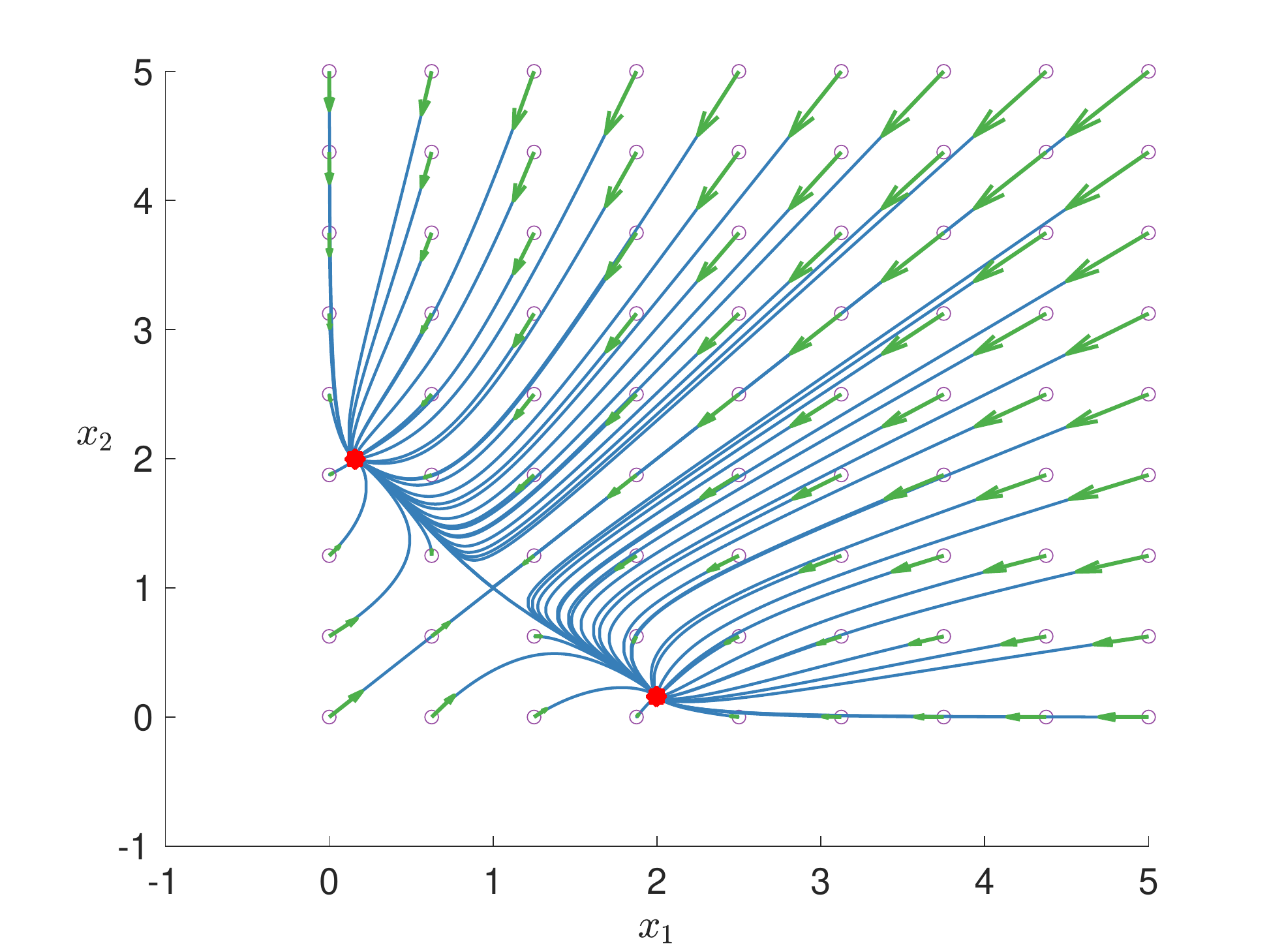}
\caption{Phase portrait of the bistable toggle switch. This system has two stable attractors for device parameters: $\beta = 3.55$, $\gamma = 3.53$, $\alpha_1 = \alpha_2 = 1$ and $\kappa_1 = \kappa_2 = 0.5$. Circles indicate the initial conditions, green arrow indicate the direction of the vector field at the initial condition and the red dots denote the equilibrium points.}
\label{fig:bistable_phase_portrait}
\end{center}
\end{figure}



\textbf{Phase Space Exploration (Global to Local):} 
The eigenvalues of the global Koopman operator ${\cal K}$ are shown in Fig. \ref{fig:eigenvalues_bistable} and there are two (dominant) eigenvalues at $\lambda = 1$. The eigenvectors corresponding to these eigenvalues are linearly independent and hence, it shows that there are two invariant subspaces on the state space (follows from Lemma \ref{lemma:am_gm}). Moreover, these eigenvectors associated with the unitary eigenvalues when evaluated on the state space captures the attractors whose centers are the equilibrium points as seen in Fig. \ref{fig:bistable_inv_spaces}. 
\begin{figure}[h!]
\begin{center}
\includegraphics[width=0.6 \textwidth]{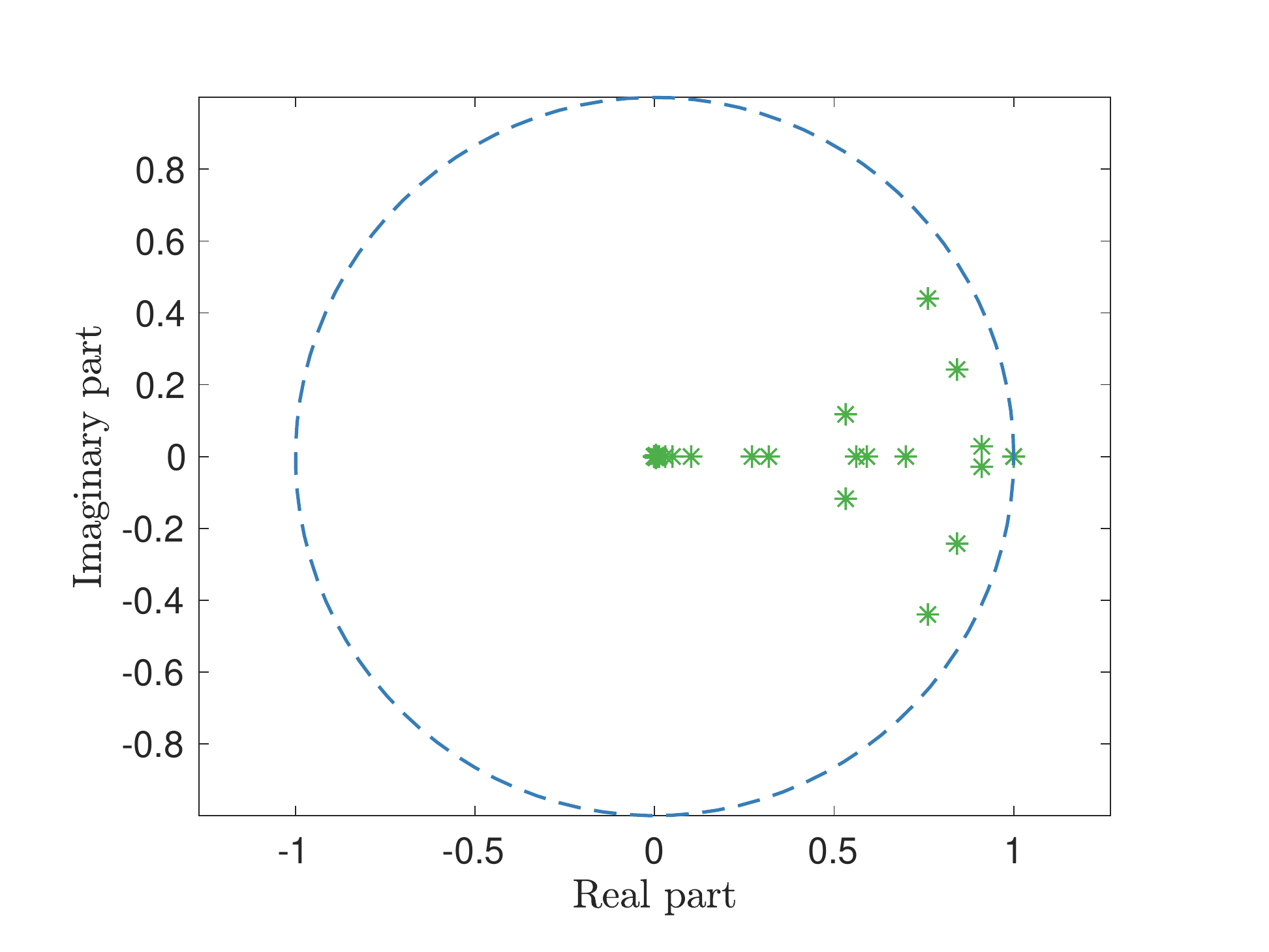}
\caption{Eigenvalues of the Koopman operator ${\cal K}$ corresponding to the bistable toggle switch system given in Eq. \eqref{eq:bistable_toggle_switch}.}
\label{fig:eigenvalues_bistable}
\end{center}
\end{figure}
\begin{figure}[h!]
\begin{center}
\subfigure[]{\includegraphics[width = 0.42 \linewidth]{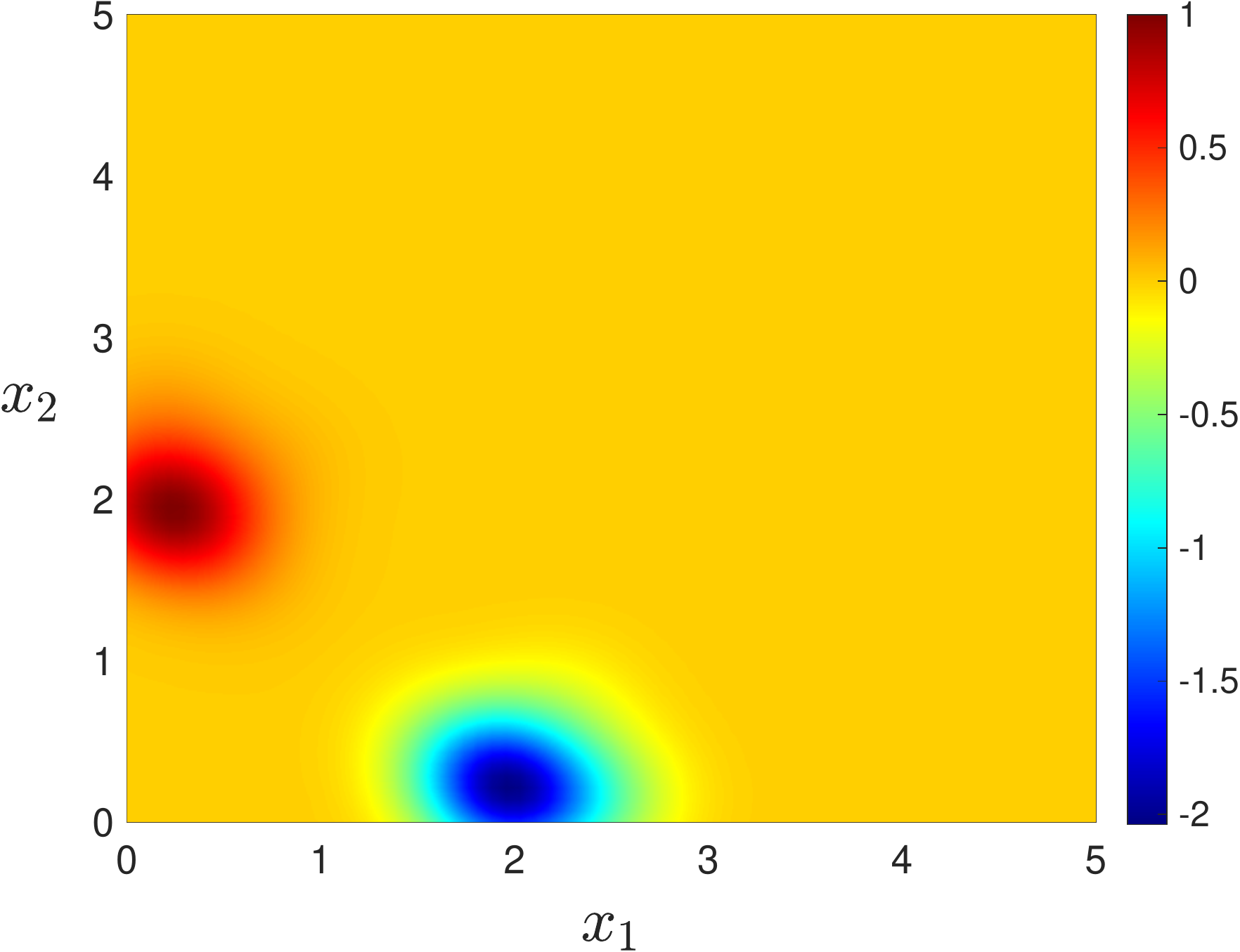}}
\subfigure[]{\includegraphics[width = 0.42 \linewidth]{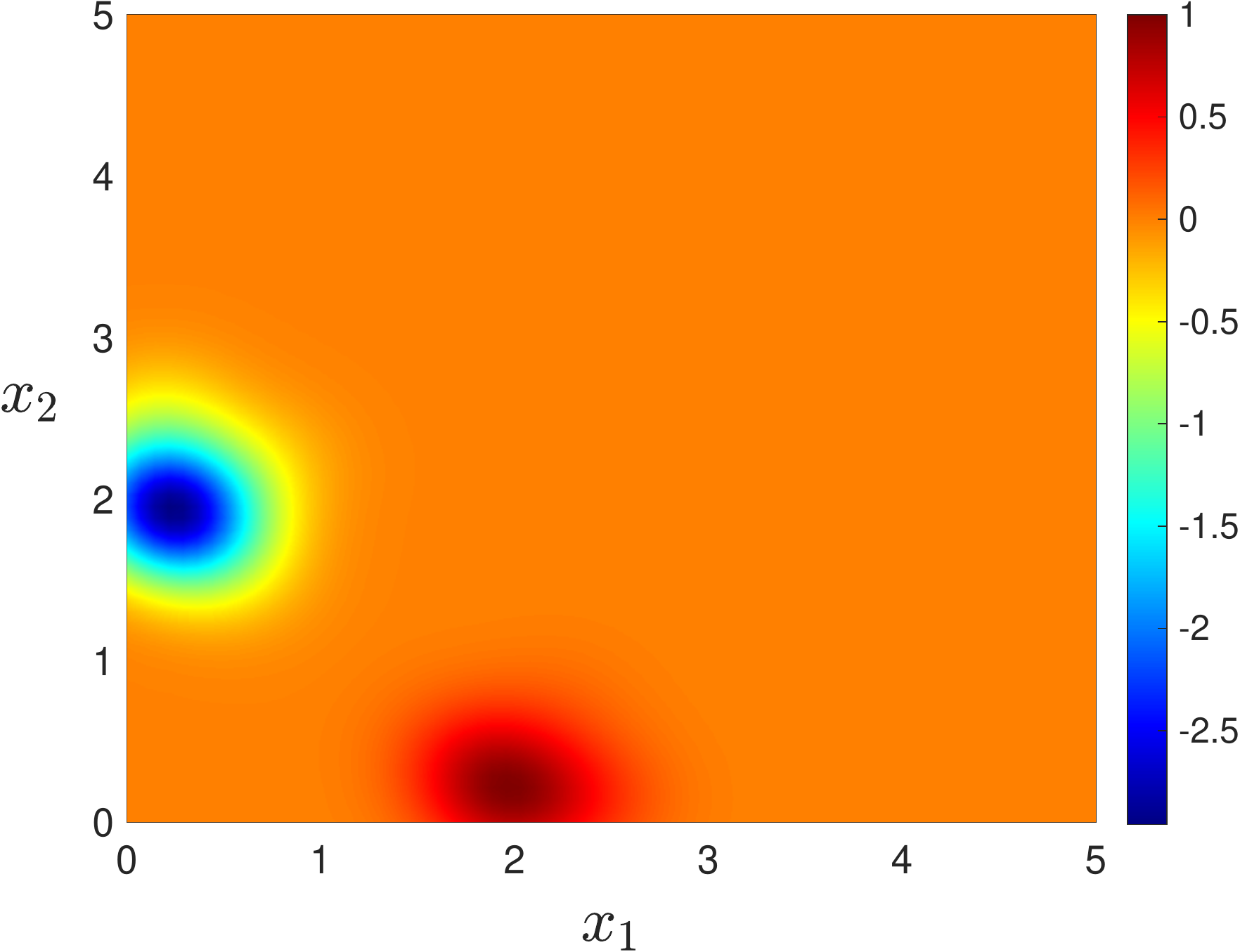}}
\caption{(a) and (b) Eigenvectors of the Koopman operator corresponding to the eigenvalue $\lambda = 1$ on the state space. The region around the equilibrium point can be seen inside the (blue) colored ellipses.}
\label{fig:bistable_inv_spaces}
\end{center}
\end{figure}

\textbf{Phase Space Exploration (Local to Global):} Assuming knowledge of the phase space on the local invariant subspaces, local Koopman operators are computed and the stitched Koopman operator corresponding to the global phase space is computed using the phase space stitching result (see Algorithm \ref{alg} and Section \ref{sec:phase_space_stitching}). Recall, this stitched Koopman is equivalent to the global Koopman operator but evolves on a different observable space. For computing each of the local Koopman operators, namely, ${\cal K}_{left}$ and ${\cal K}_{right}$, $30$ Gaussian radial basis functions are used. Clearly there is an eigenvalue at $\lambda = 1$ for ${\cal K}_{left}$ as well as ${\cal K}_{right}$ and their corresponding eigenvectors on the state space reveals the attractor sets around the equilibrium points. The stitched Koopman computed using Algorithm \ref{alg} is given by ${\cal K}_{\cal S}:=$diag$({\cal K}_{left}, {\cal K}_{right})$. The eigenvalues of the stitched Koopman operator are shown in Fig. \ref{fig:bistable_eigenvalues_stitched}. It is important to note that the size of ${\cal K}_{\cal S}$ is $60\times 60$ whereas the size of the global Koopman (computed above), ${\cal K}$ is $30\times 30$. Moreover the sparse structure of both of these Koopman operators can be seen in Fig. \ref{fig:bistable_Koopman_structures}. 
\begin{figure}[h!]
\begin{center}
\includegraphics[width=.6 \textwidth]{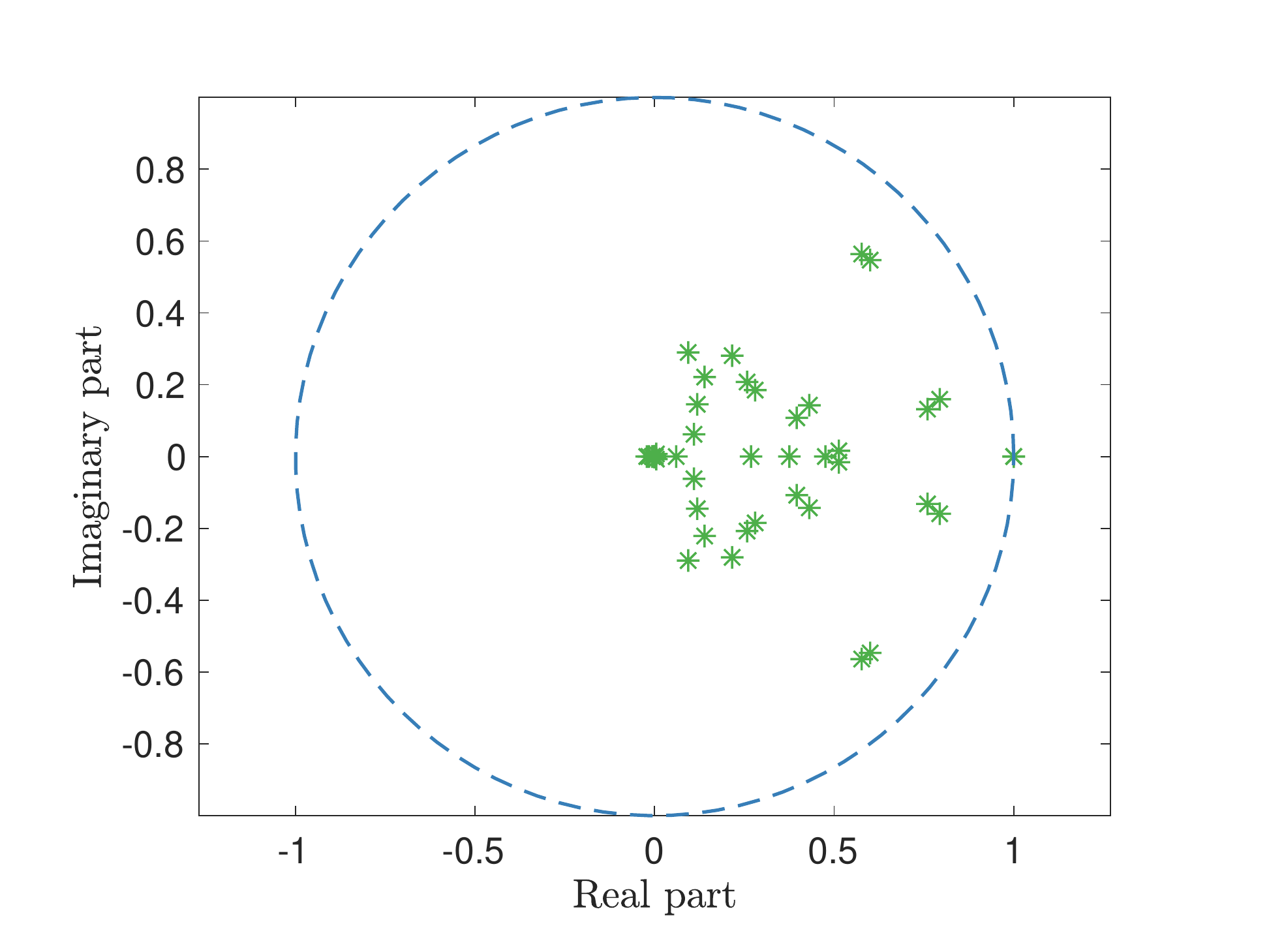}
\caption{Eigenvalues of the stitched Koopman operator ${\cal K}_{\cal S}$ corresponding to the bistable toggle switch system given in Eq. \eqref{eq:bistable_toggle_switch}.}
\label{fig:bistable_eigenvalues_stitched}
\end{center}
\end{figure}
%
\begin{figure}[h!]
\begin{center}
\subfigure[]{\includegraphics[width = 0.45 \linewidth]{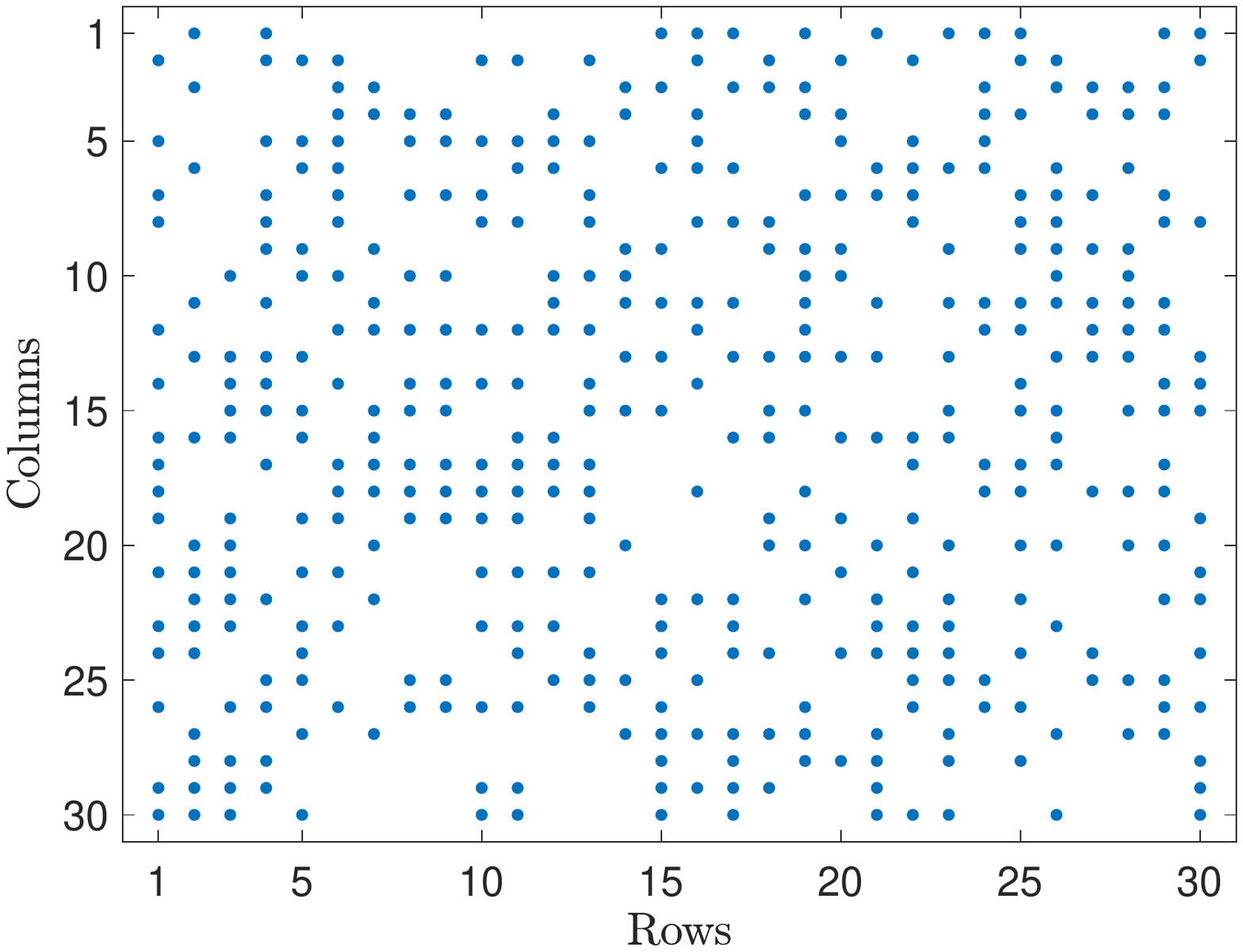}}
\subfigure[]{\includegraphics[width = 0.45 \linewidth]{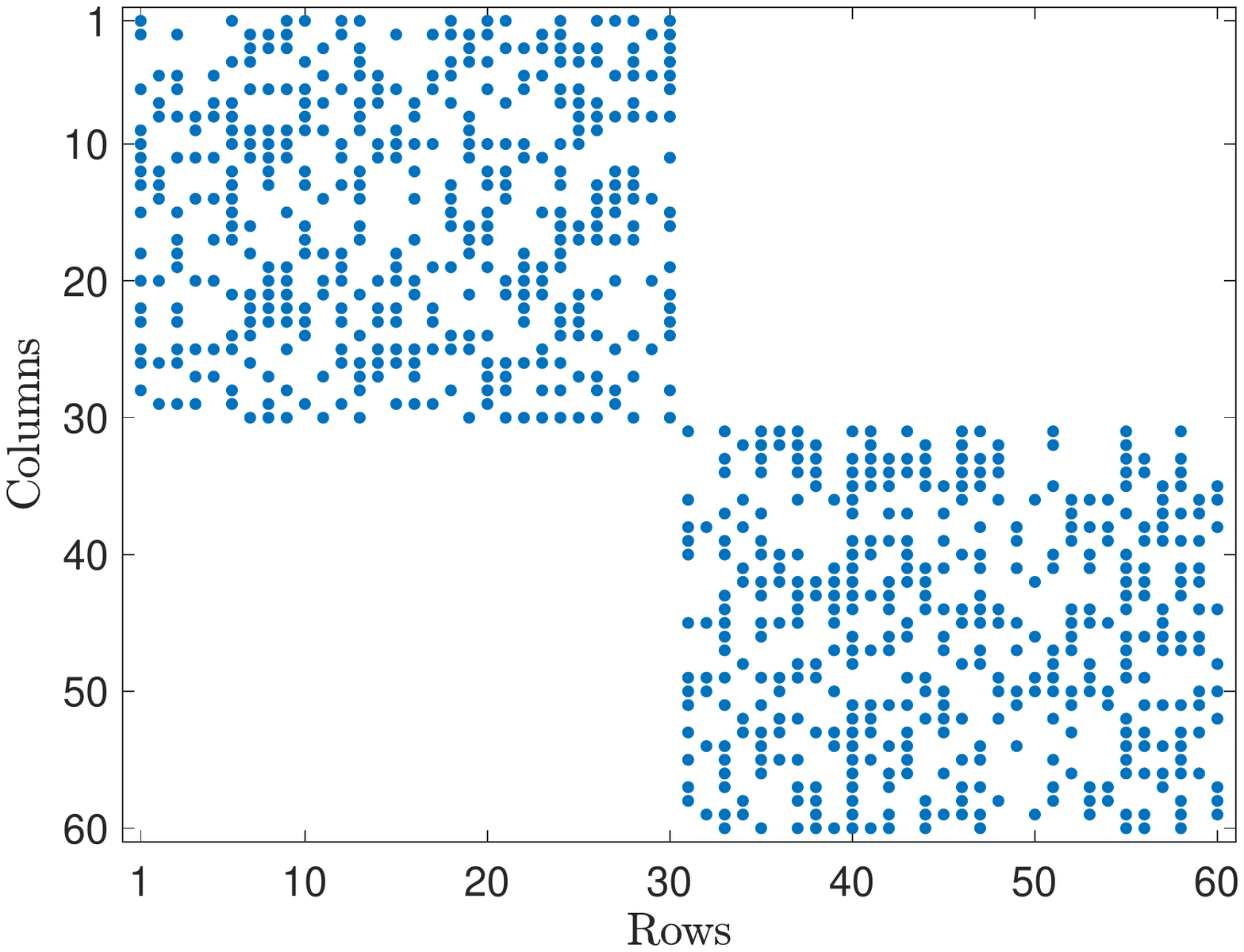}}
\caption{(a) Sparse structure of the Koopman operator ${\cal K}$ (b) Sparse structure of the stitched Koopman operator ${\cal K}_{\cal S}$.}
\label{fig:bistable_Koopman_structures}
\end{center}
\end{figure}

\textbf{Stitched Koopman Operator Validation:}
It is crucial to validate the stitched Koopman operator, ${\cal K}_{\cal S}$ if it captures the attractor sets on the state space of the dynamical system. To validate this, the eigenvalues are computed and their  eigenfunctions corresponding to the dominant eigenvalues are shown in Fig.  \ref{fig:bistable_inv_spaces_stitched}. Clearly Fig. \ref{fig:bistable_inv_spaces_stitched} shows that each of the attractor sets on the state space are captured by the eigenfunctions associated with the first two leading eigenvalues of ${\cal K}_{\cal S}$. 
\begin{figure}[h!]
\begin{center}
\subfigure[]{\includegraphics[width = 0.45 \linewidth]{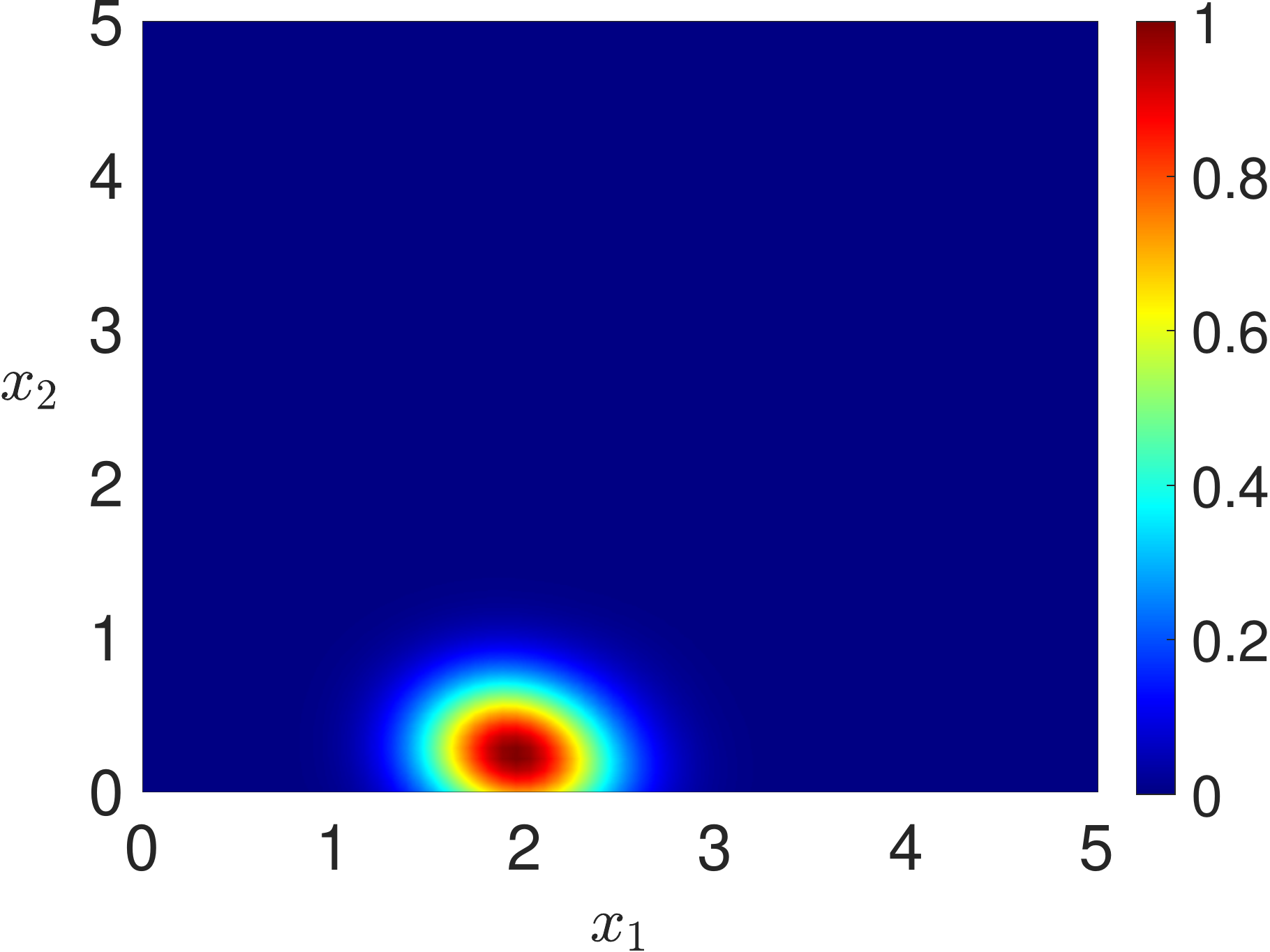}}
\subfigure[]{\includegraphics[width = 0.45 \linewidth]{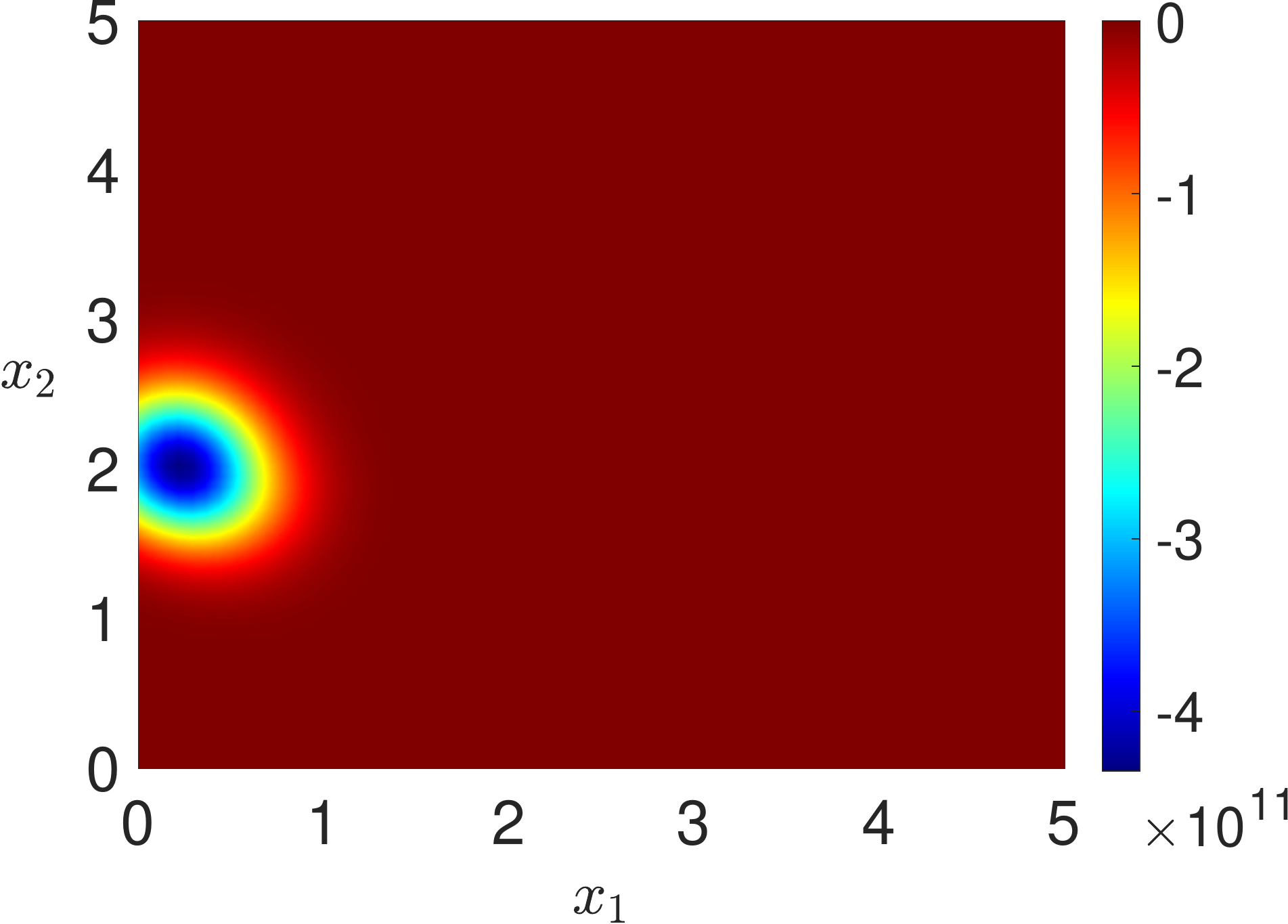}}
\caption{(a) and (b) Eigenvectors of ${\cal K}_{\cal S}$ associated with dominant eigenvalues $\lambda = 1$ on the state space. Eigenvectors of the stitched Koopman operator captures both the invariant sets of the state space.}
\label{fig:bistable_inv_spaces_stitched}
\end{center}
\end{figure}

Observe that, we have now computed two global Koopman operators, one assuming the knowledge of the entire state space to obtain ${\cal K}$ and the other by stitching the local invariant subspaces to obtain ${\cal K}_{\cal S}$. Fig. \ref{fig:bistable_inv_spaces_stitched} shows the attractor sets on the state space identified using ${\cal K}_{\cal S}$ and moreover they also approximate well the invariant sets shown in Fig. \ref{fig:bistable_inv_spaces}. This validates the proposed approach of phase space stitching to compute the global Koopman operator from local Koopman operators. 

\textbf{Global Koopman Operator using Symmetry Properties:}
Clearly, the phase space of the bistable toggle switch consists of two invariant subspaces, $x_1 > x_2$ and $x_1 < x_2$ respectively and they are symmetric to each other. In particular, these two invariant subspaces are reflective of each other. In the scenario where the knowledge of this symmetry is known and the time-series from an experiment is available on any of the invariant subspaces, then phase space stitching (Algorithm \ref{alg}) is invoked to identify the stitched Koopman or equivalently global Koopman operator.  

We demonstrate the global Koopman operator computation using DMD, however additional work is needed to show this under EDMD or deepDMD. The local Koopman operator corresponding to the region $x_1 > x_2$ using the DMD is given by
\begin{align*}
{\cal K}_{right} = & \begin{bmatrix} 0.6039 & 0.0313 \\ -0.4784 & 1.0375 \end{bmatrix}.
\end{align*}
Then the Koopman operator corresponding to the reflective space $x_1 < x_2$ is obtained as 
\begin{align*}
    {\cal K}_{left} = & \gamma^{-1}{\cal K}_{right}\gamma=\begin{bmatrix}
    1.0375 & -0.4784 \\ 0.0313 & 0.6039
    \end{bmatrix}. 
\end{align*}
where $\gamma$ denotes the reflective transformation between the invariant subspaces such that $(x_1,x_2)^\top\xmapsto{\gamma} (x_2,x_1)^\top$. The matrix representation for $\gamma$ corresponding to the reflective symmetry is given by $\begin{pmatrix} 0 & 1 \\ 1 & 0 \end{pmatrix}$. 
Then the global stitched Koopman operator is given by 
\begin{align*}
    {\cal K}_{global} = \begin{bmatrix}
    {\cal K}_{right} & \\ & {\cal K}_{left}
    \end{bmatrix}.
\end{align*}

We next demonstrate the phase space stitching results on a nonlinear second-order system. 

\subsection{Second Order System with Bilinear and Quadratic Terms}
Consider a second-order dynamical system governed by the dynamics:
\begin{equation}
\begin{aligned}
\dot{x}_1 = & x_1 - x_1 x_2 \\
\dot{x}_2 = & x_1^2 -2x_2
\end{aligned}
\label{eq:heart_dyn_system}
\end{equation}
The system \eqref{eq:heart_dyn_system} has 3 equilibrium points at $(\sqrt{2},1)$, $(-\sqrt{2},1)$ and $(0,0)$. It is seen that the origin is a saddle point and the other two equilibrium points are stable. 

\textbf{Data Generation:} 
The Koopman operator is obtained by training on the complete state space data with $81$ initial conditions and for each initial condition, $21$ time-points and is denoted by ${\cal K}$ where $30$ Gaussian radial basis functions with $\sigma = 0.4$ are used. The observable functions used for the bistable toggle switch are used for this system as well however the centers for the radial basis functions are computed according to the time-series data of \eqref{eq:heart_dyn_system}.  The phase portrait of this system is shown in Fig. \ref{fig:phase_portrait_heart}. 
\begin{figure}[h!]
\begin{center}
\includegraphics[width = 0.6 \textwidth]{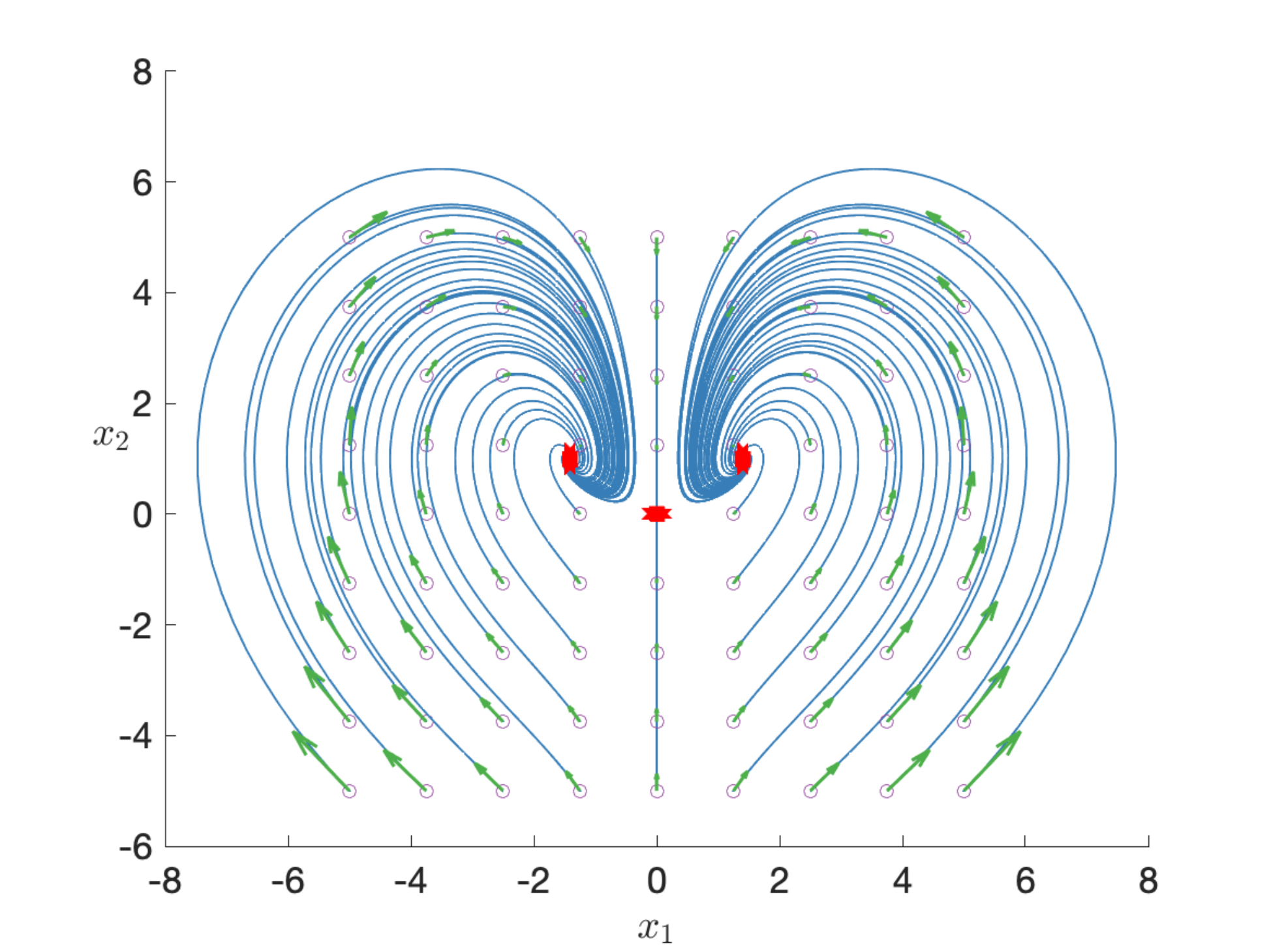}
\caption{Phase portrait corresponding to the system \eqref{eq:heart_dyn_system}. Circles indicate the initial conditions, green arrow indicate the direction of the vector field at the initial condition and the red dots are the equilibrium points.}
\label{fig:phase_portrait_heart}
\end{center}
\end{figure}

\textbf{Phase Space Exploration (Global to Local):} 
The global Koopman operator, ${\cal K}$ is computed and the eigenvalues of ${\cal K}$ are shown in Fig. \ref{fig:eigenvalues_heart}. Clearly the three dominant eigenvalues of ${\cal K}$ are located at $\lambda = 1$ with geometric multiplicity ($m_g$) equal to 1. The eigenfunctions corresponding to these dominant eigenvalues of ${\cal K}$ are shown in Fig. \ref{fig:inv_spaces_heart}.
\begin{figure}[h!]
\begin{center}
\includegraphics[width = 0.6 \textwidth]{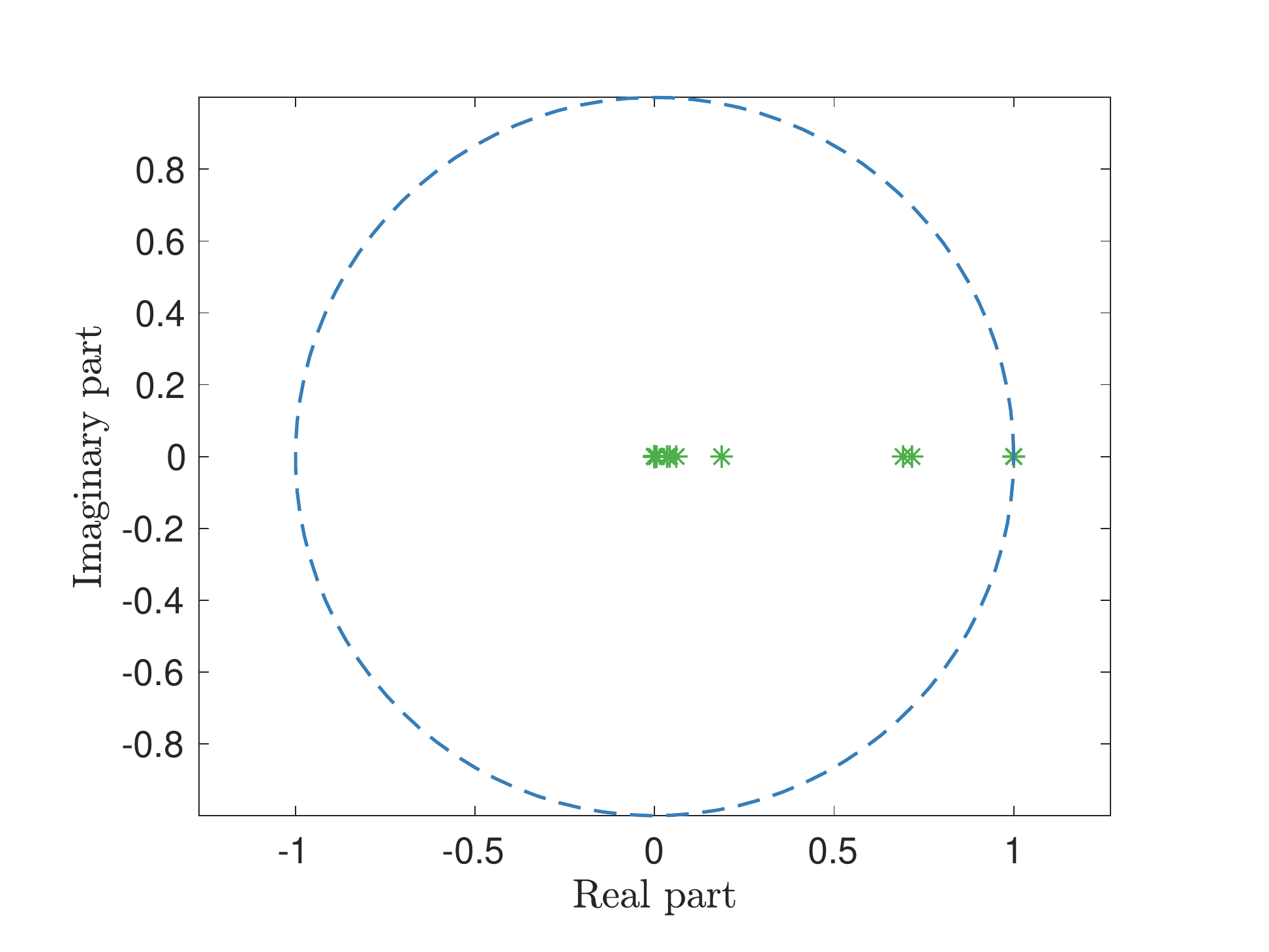}
\caption{Eigenvalues of the stitched Koopman operator, ${\cal K}$ corresponding to the system given in Eq. \eqref{eq:heart_dyn_system}.}
\label{fig:eigenvalues_heart}
\end{center}
\end{figure}
\begin{figure}[h!]
    \centering
    \subfigure[]{\includegraphics[width = 0.32 \textwidth]{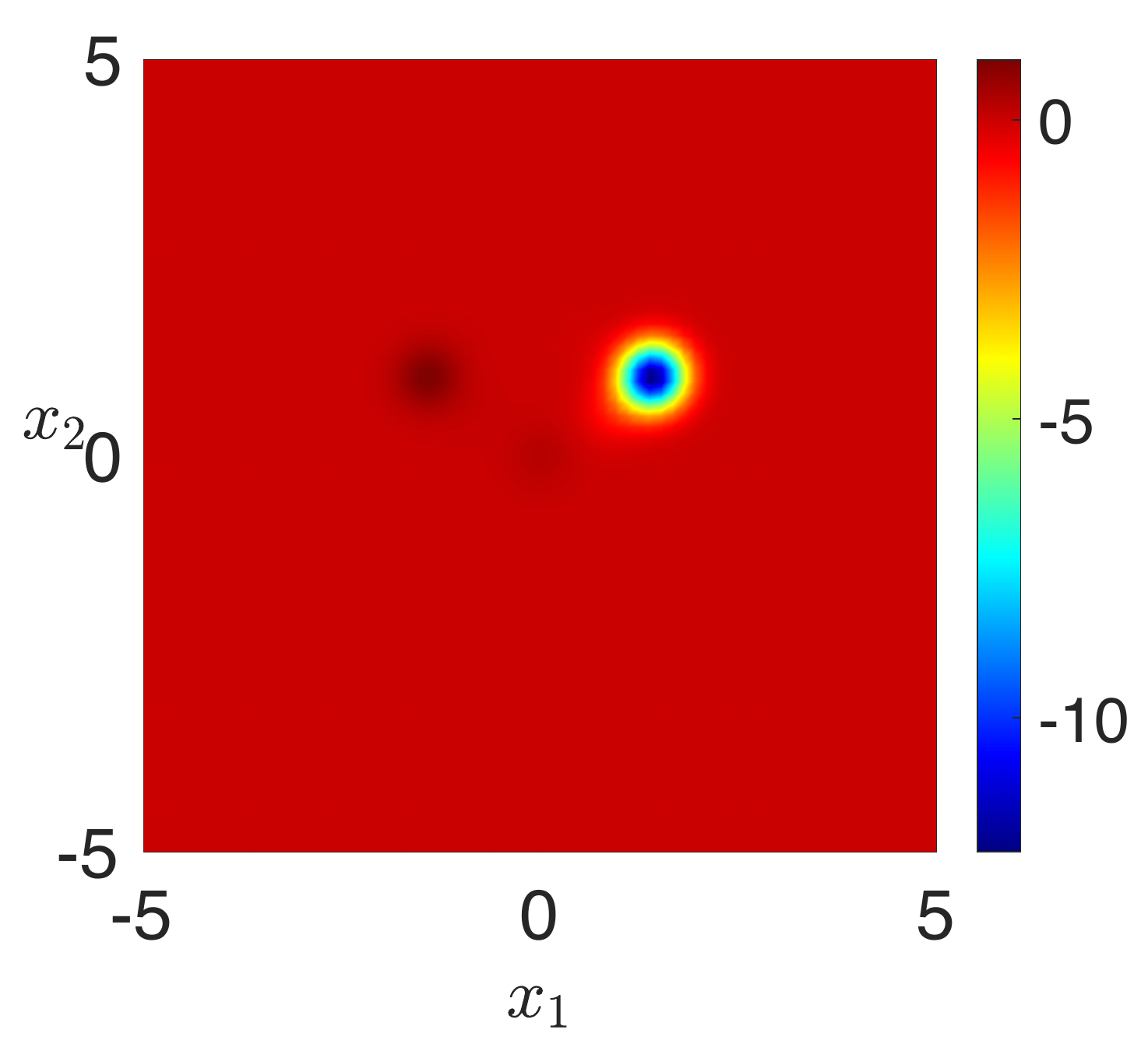}}
    \subfigure[]{\includegraphics[width = 0.32 \textwidth]{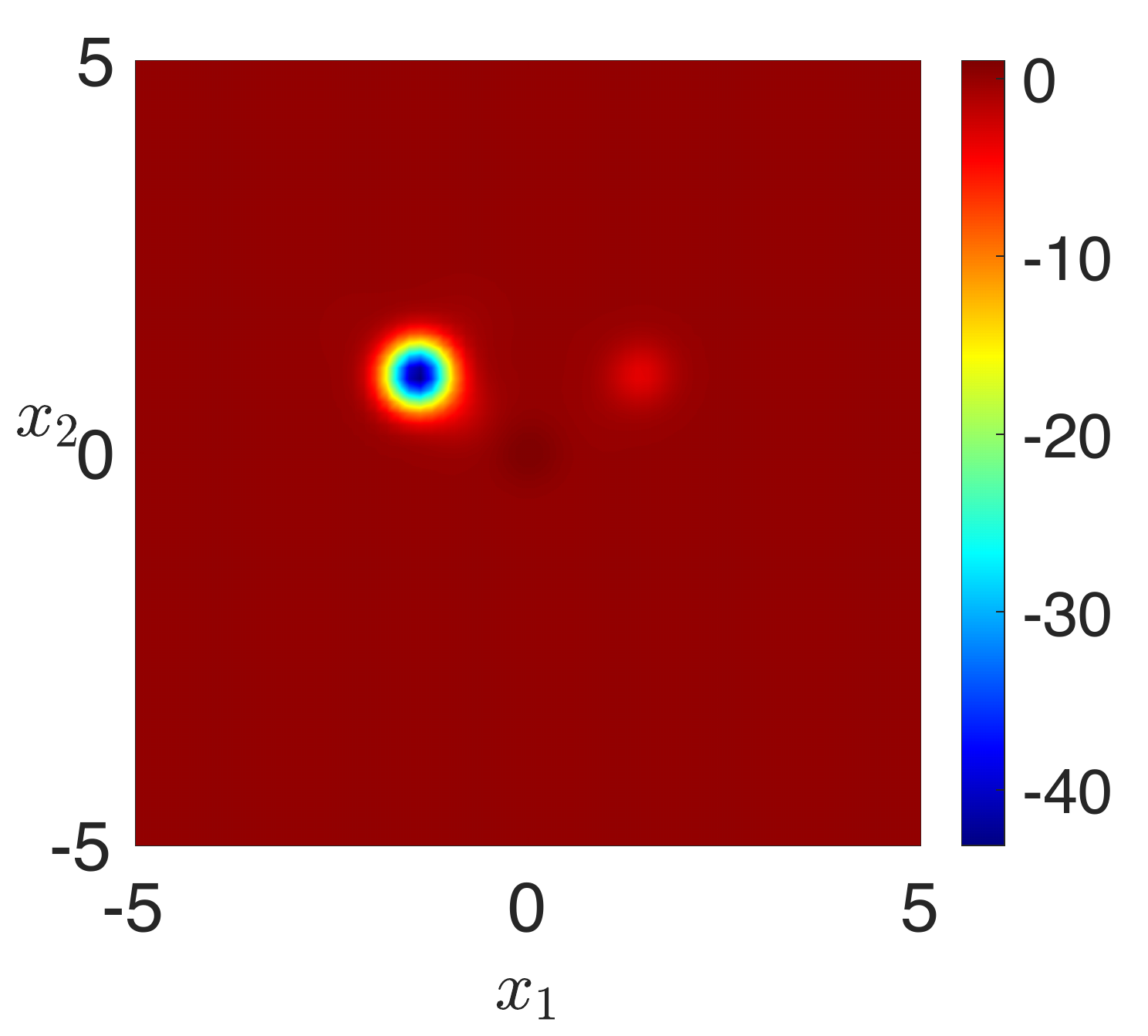}}
    \subfigure[]{\includegraphics[width = 0.32 \textwidth]{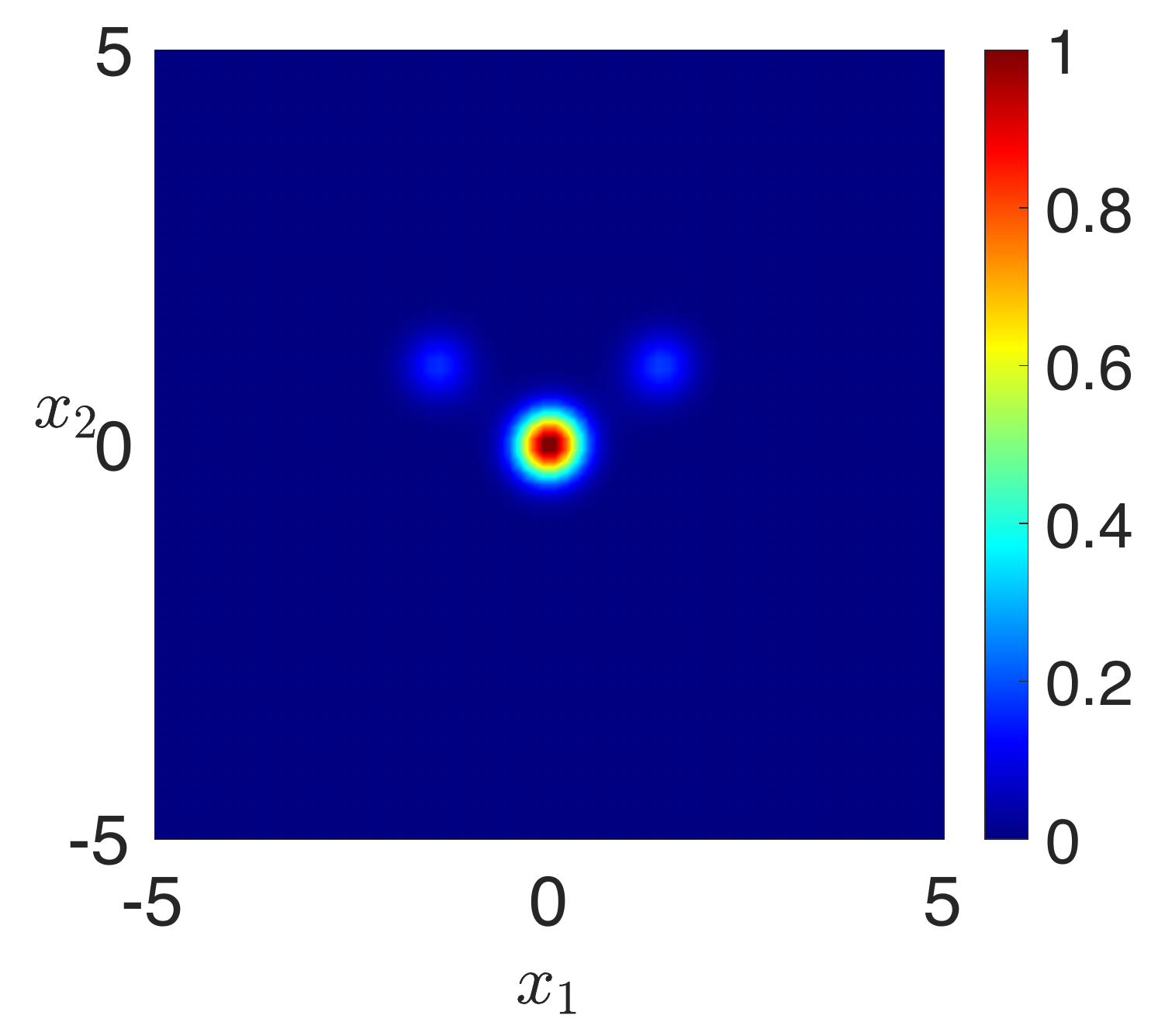}}
    \caption{Eigenvectors corresponding to the dominant eigenvalues of ${\cal K}$ on state space. (a) and (b) stable equilibrium points and (c) saddle point. }
    \label{fig:inv_spaces_heart}
\end{figure}

\textbf{Phase Space Exploration (Local to Global):}
Assume there is no knowledge of the complete phase space and the knowledge about the local invariant sets are only known. Then, the corresponding local Koopman operators are denoted by ${\cal K}_{left}$ and ${\cal K}_{right}$. Furthermore, the stitched Koopman operator with respect to these local Koopman operators is given by ${\cal K}_{\cal S}$. 
The eigenvalues and the sparse structure of the stitched Koopman operator when compared to ${\cal K}$ are shown in Fig. \ref{fig:eigenvalues_heart_stitched} and Fig. \ref{fig:heart_Koopman_structures} respectively. 
\begin{figure}[h!]
\begin{center}
\includegraphics[width = 0.6 \linewidth]{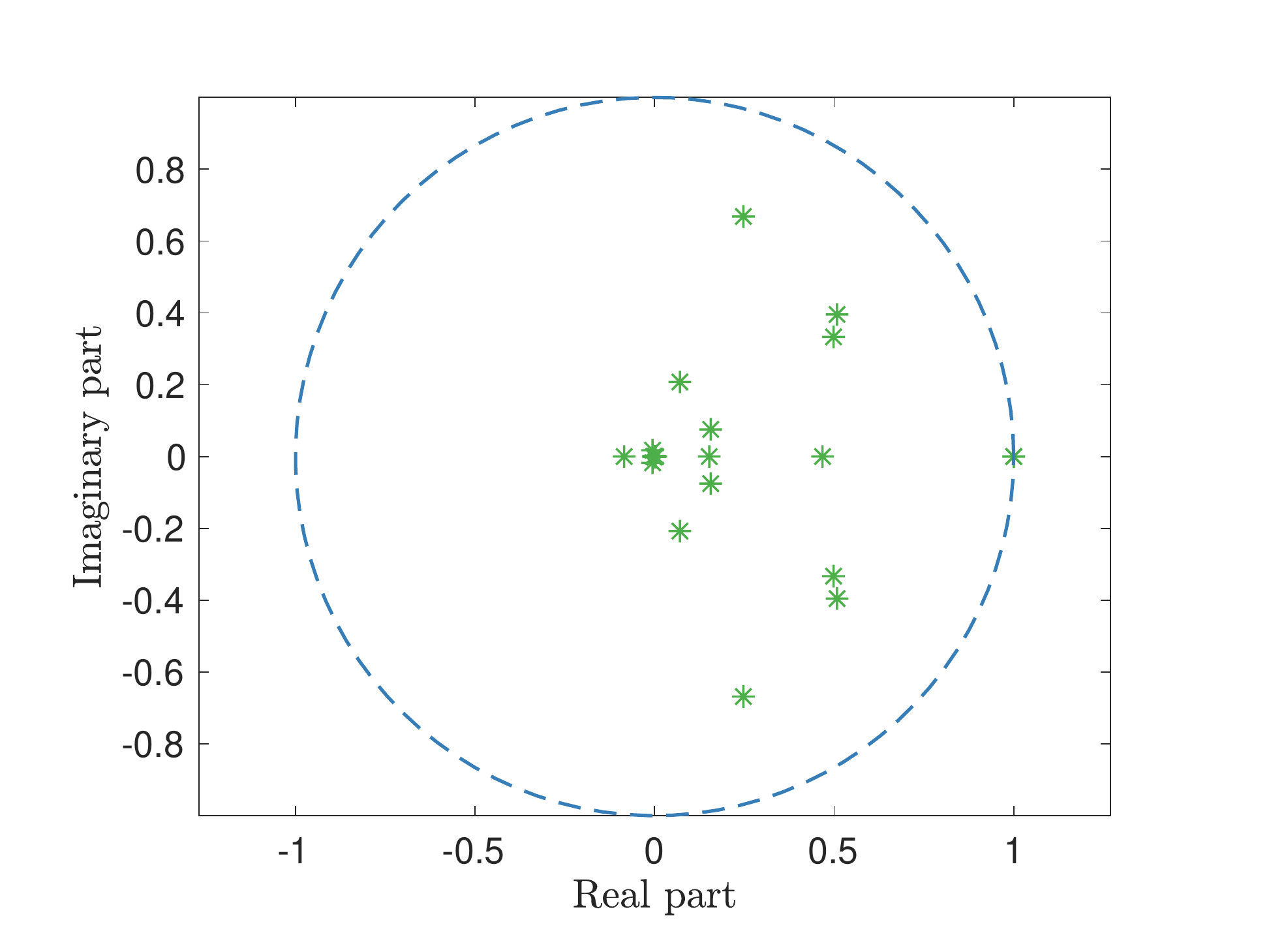}
\caption{Eigenvalues of the stitched Koopman operator ${\cal K}_{\cal S}$ corresponding to the system \eqref{eq:heart_dyn_system}.}
\label{fig:eigenvalues_heart_stitched}
\end{center}
\end{figure}
\begin{figure}[h!]
\begin{center}
\subfigure[]{\includegraphics[width = 0.45 \linewidth]{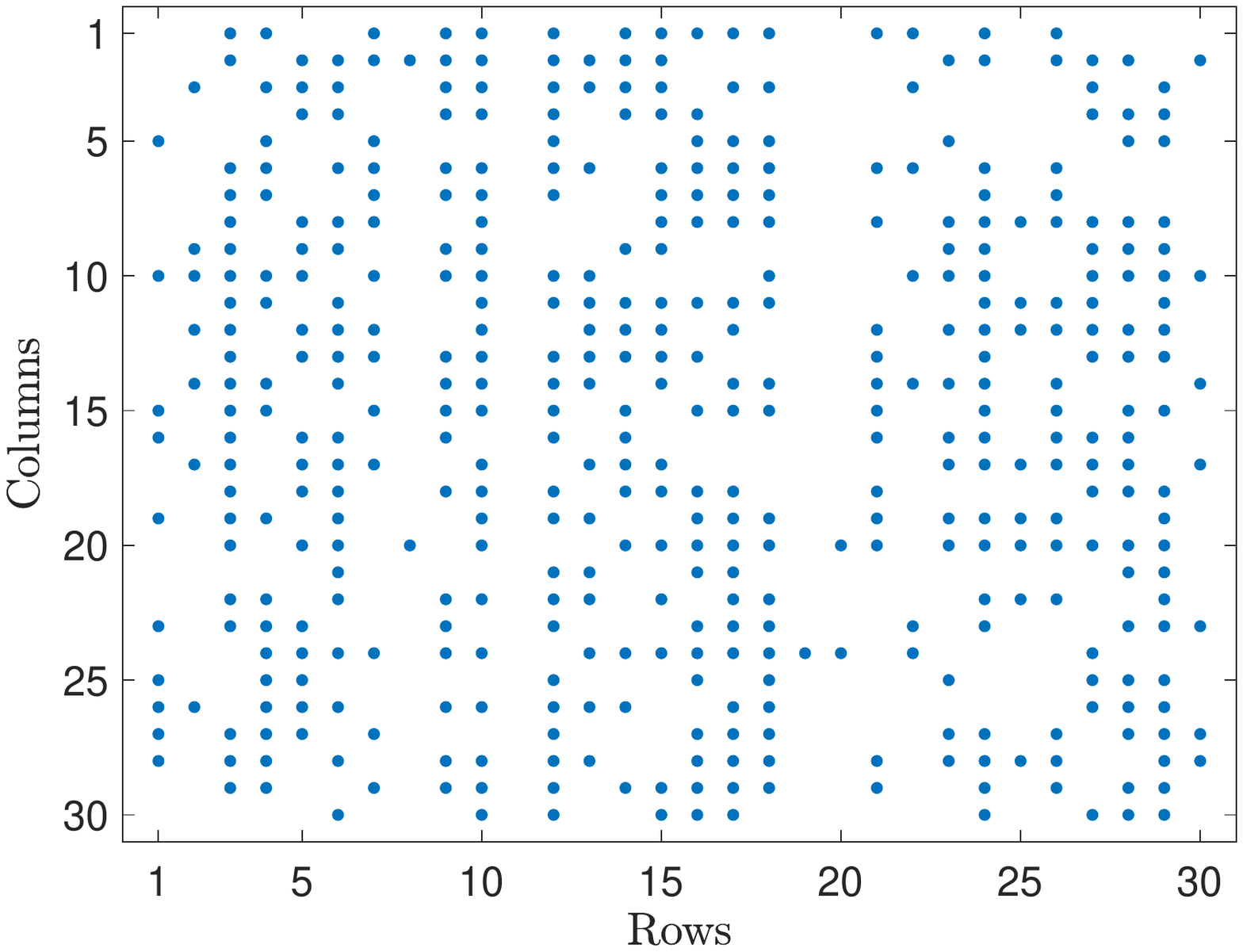}}
\subfigure[]{\includegraphics[width = 0.45 \linewidth]{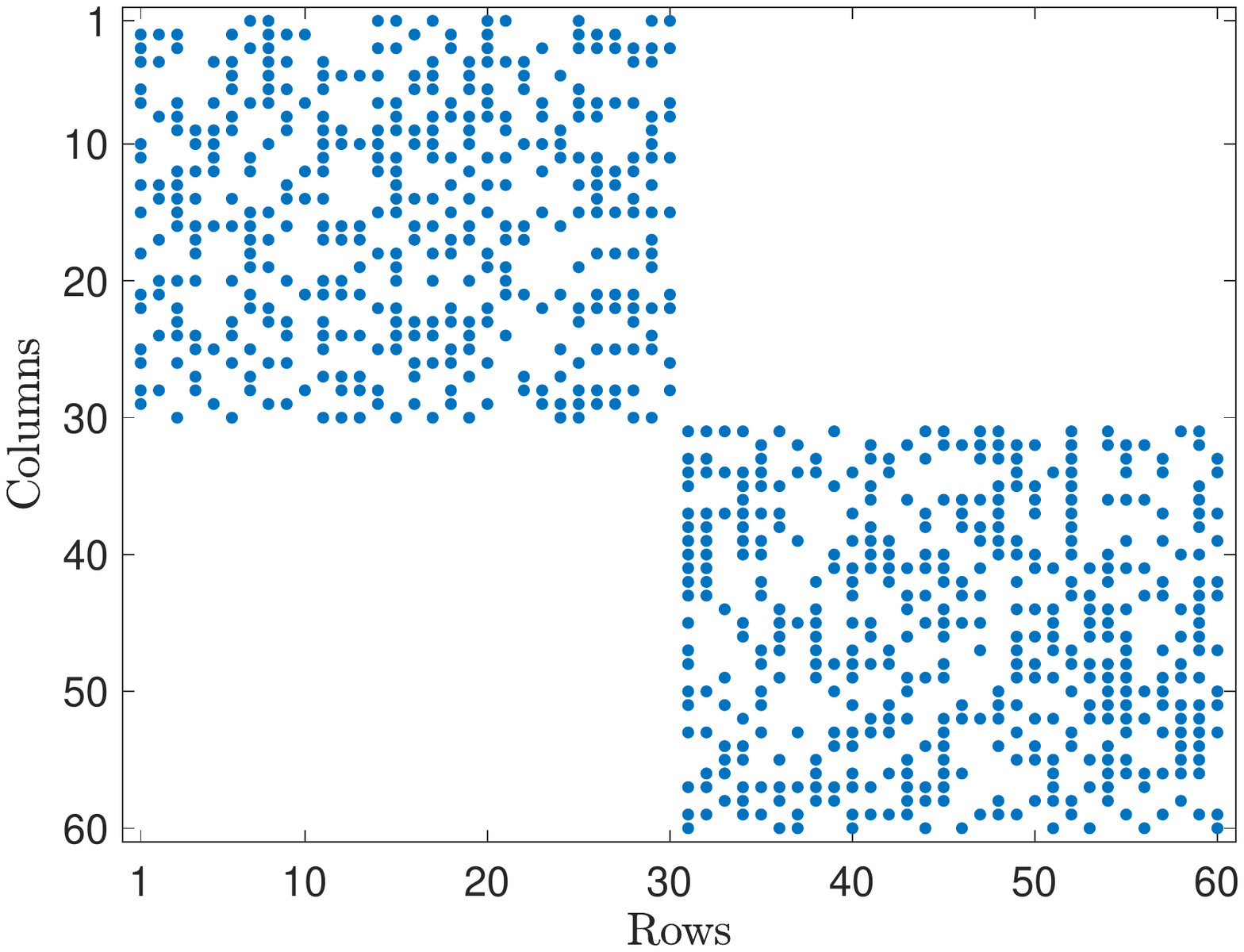}}
\caption{(a) Sparse structure of the Koopman operator ${\cal K}$ (b) Sparse structure of the stitched Koopman operator ${\cal K}_{\cal S}$.}
\label{fig:heart_Koopman_structures}
\end{center}
\end{figure}

\textbf{Stitched Koopman Operator Validation:}
The eigenvector plots corresponding to the dominant eigenvalues of ${\cal K}_{\cal S}$ are shown in Fig. \ref{fig:heart_inv_spaces_stitched} (a) and (b). It can be seen that the stitched Koopman operator identifies the attractor sets in the state space. 
\begin{figure}[h!]
\begin{center}
\subfigure[]{\includegraphics[width = 0.45 \linewidth]{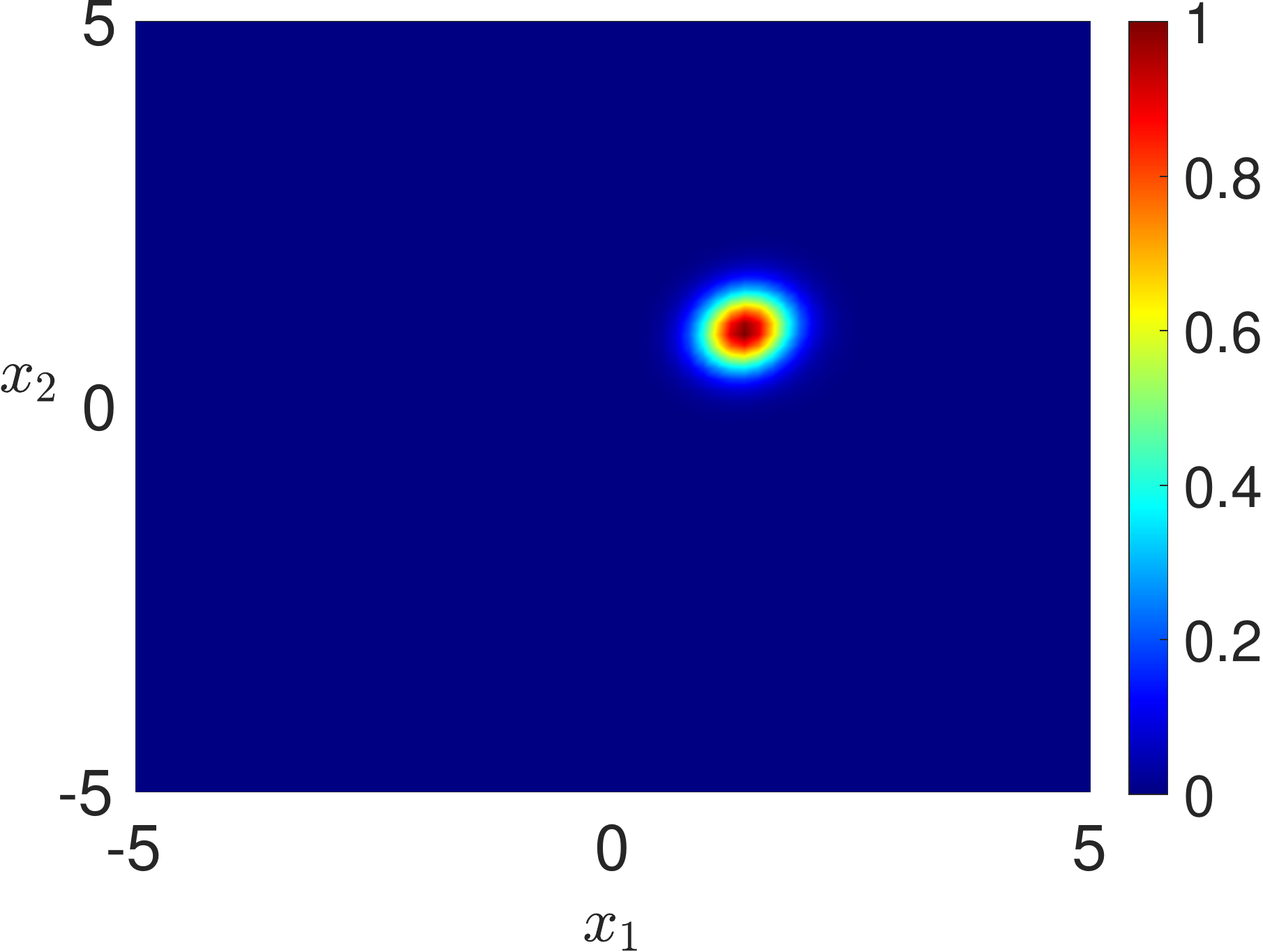}}
\subfigure[]{\includegraphics[width = 0.45 \linewidth]{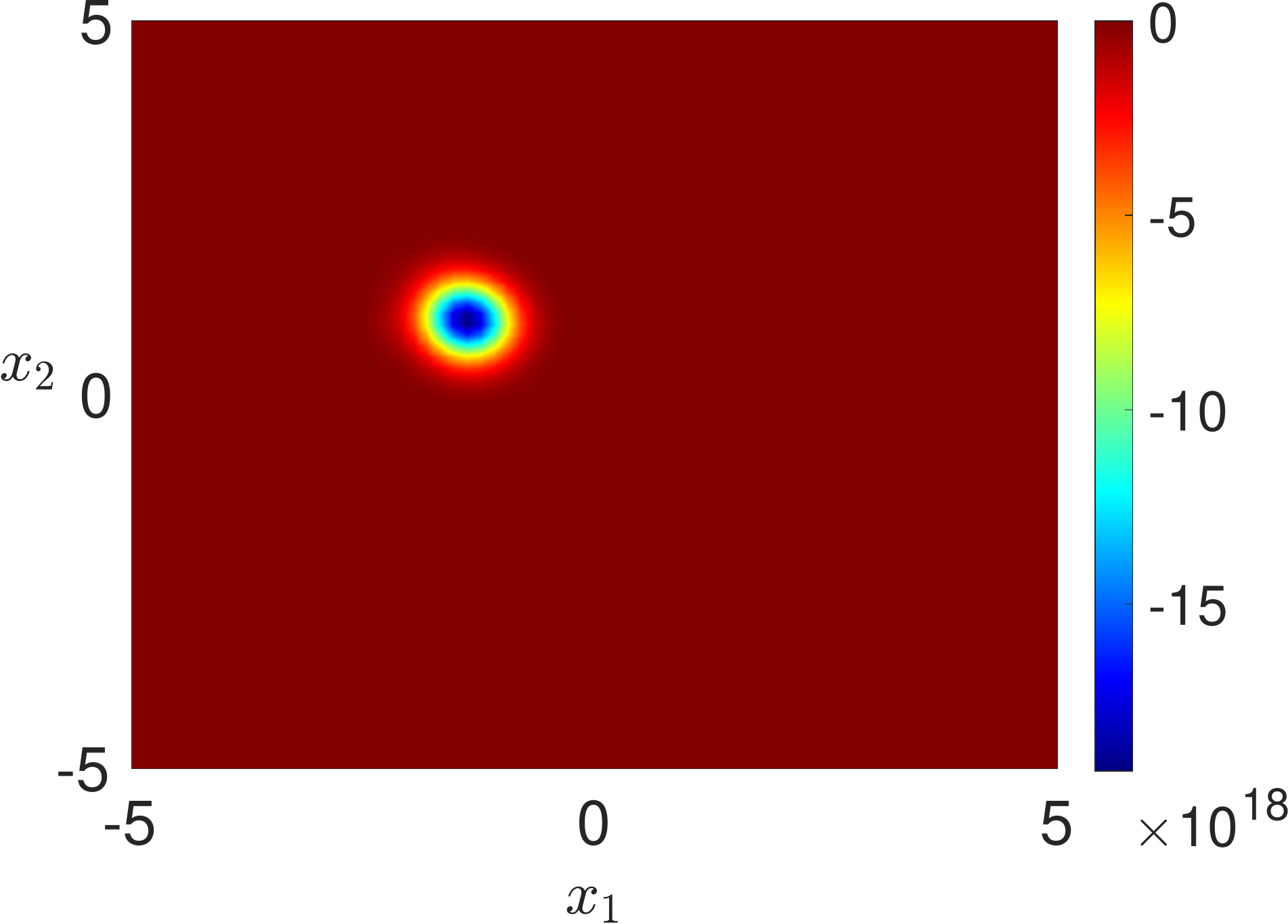}}
\caption{(a) and (b) Eigenvectors of ${\cal K}_{\cal S}$ associated with dominant eigenvalues $\lambda = 1$ on the state space. Eigenvectors of the stitched Koopman operator captures both the invariant sets of the state space.}
\label{fig:heart_inv_spaces_stitched}
\end{center}
\end{figure}
Therefore, the stitched Koopman operator describes the global behavior of the system and identifies the attractor sets of the phase space in comparison to ${\cal K}$ (which me may not have access to in real experiments).

\textbf{Global Koopman Operator using Symmetry Properties:} 
We now discuss how the global Koopman operator can be identified if the knowledge of time-series data from any one invariant subspace is available and under the assumption that the type of symmetry between the invariant subspaces of the dynamical system is known. We demonstrate the global Koopman operator computation using DMD, however additional work is needed to show this under EDMD or deepDMD. Let's say we have access to the time-series data in the region $M_{right} = \{x \in \mathbb{R}^2: x_1 > 0\}$. The local Koopman operator applying DMD on the data is given by 
\begin{align*}
    {\cal K}_{right} = \begin{bmatrix}     0.9782  &  0.0253 \\
    0.7755 &  -0.0955
    \end{bmatrix}.
\end{align*}
Let $M_{left}$ be another invariant subspace such that, $M_{left} = \{x \in \mathbb{R}^2: x_1 < 0\}$. Under the assumption that the invariant subspaces $M_{right}$ and $M_{left}$ has a reflective symmetry, that is, $(x_1,x_2)^{\top}\xmapsto{\gamma} (-x_1,x_2)^{\top}$. Then it follows from Theorem \ref{K_i_K_j_theorem} that the Koopman operator corresponding to $M_{left}$ is given by
\begin{align*}
    {\cal K}_{left} = \gamma^{-1} {\cal K}_{right} \gamma = \begin{bmatrix}    0.9782 &   -0.0253 \\ 
   -0.7755 &  -0.0955
    \end{bmatrix},
\end{align*}
where $\gamma = \begin{pmatrix}-1 & 0 \\ 0 & 1\end{pmatrix}$ is the 2-dimensional representation of the non-identity element in the symmetry group. Moreover, the local Koopman on $M_{left}$ matches with the local Koopman computed from time-series data of $M_{left}$. Therefore applying the phase space stitching result (from Section \ref{sec:phase_space_stitching}) by noticing that the observables are indeed the states, the global Koopman is given by 
\begin{align*}
    {\cal K}_{global} = \begin{bmatrix} {\cal K}_{right} & 0 \\ 0 & {\cal K}_{left} \end{bmatrix}.
\end{align*}

\subsection{Topologically Conjugate Systems} 

In the following study, we consider two topologically conjugate dynamical systems and with the knowledge of the topological conjugacy and one of the systems time-series data, the evolution of the other system is identified applying the results developed in Section \ref{sec:TC_systems}. 
Let $T_1: \mathbb{R}^2 \to \mathbb{R}^2$ and $T_2:\mathbb{R}^2 \to \mathbb{R}^2$ be two dynamical systems that are topologically conjugate with the homeomorphism $h:\mathbb{R}^2 \to \mathbb{R}^2$ such that 
\begin{align*}
    \dot{x} = & T_1(x) \\
    \dot{y} = & T_2(y)
\end{align*}
where
\begin{align*}
    x = \begin{bmatrix} x_1 \\ x_2 \end{bmatrix}, \quad 
    y = \begin{bmatrix} y_1 \\ y_2 \end{bmatrix}, \quad 
    T_1(x) = \begin{bmatrix} -x_1 \\ -x_2 +x_1^2 \end{bmatrix}, \quad 
    T_2(y) = \begin{bmatrix} -y_1 \\
    -y_2 \end{bmatrix}.
\end{align*}

The phase portraits for the system $T_1$ and $T_2$ are shown in Fig. \ref{fig:phase_portraits_TC} (a) and (b) respectively. 
\begin{figure}[h!]
\centering
\subfigure[]{\includegraphics[width = 0.45 \linewidth]{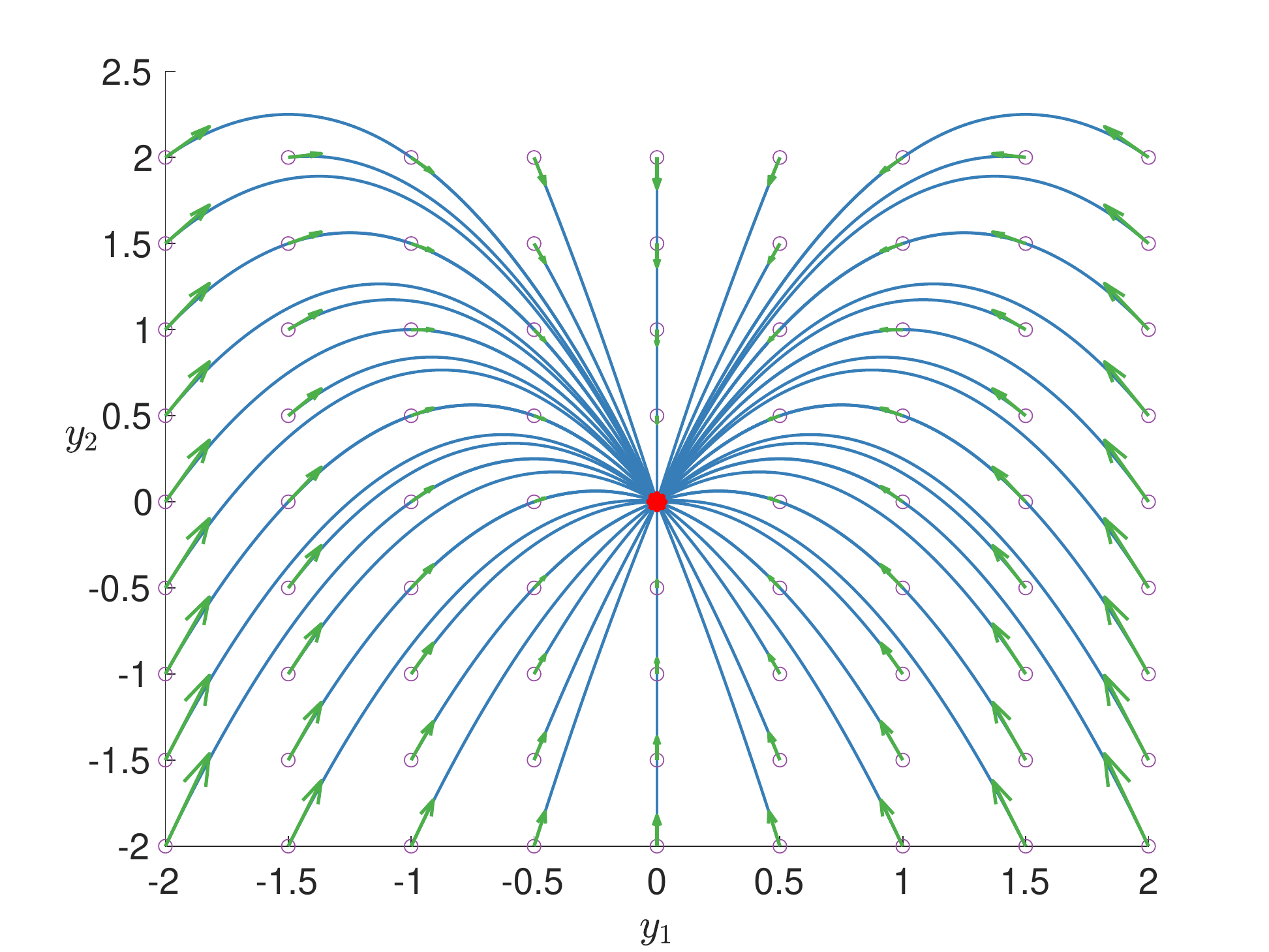}}
\subfigure[]{\includegraphics[width = 0.45 \linewidth]{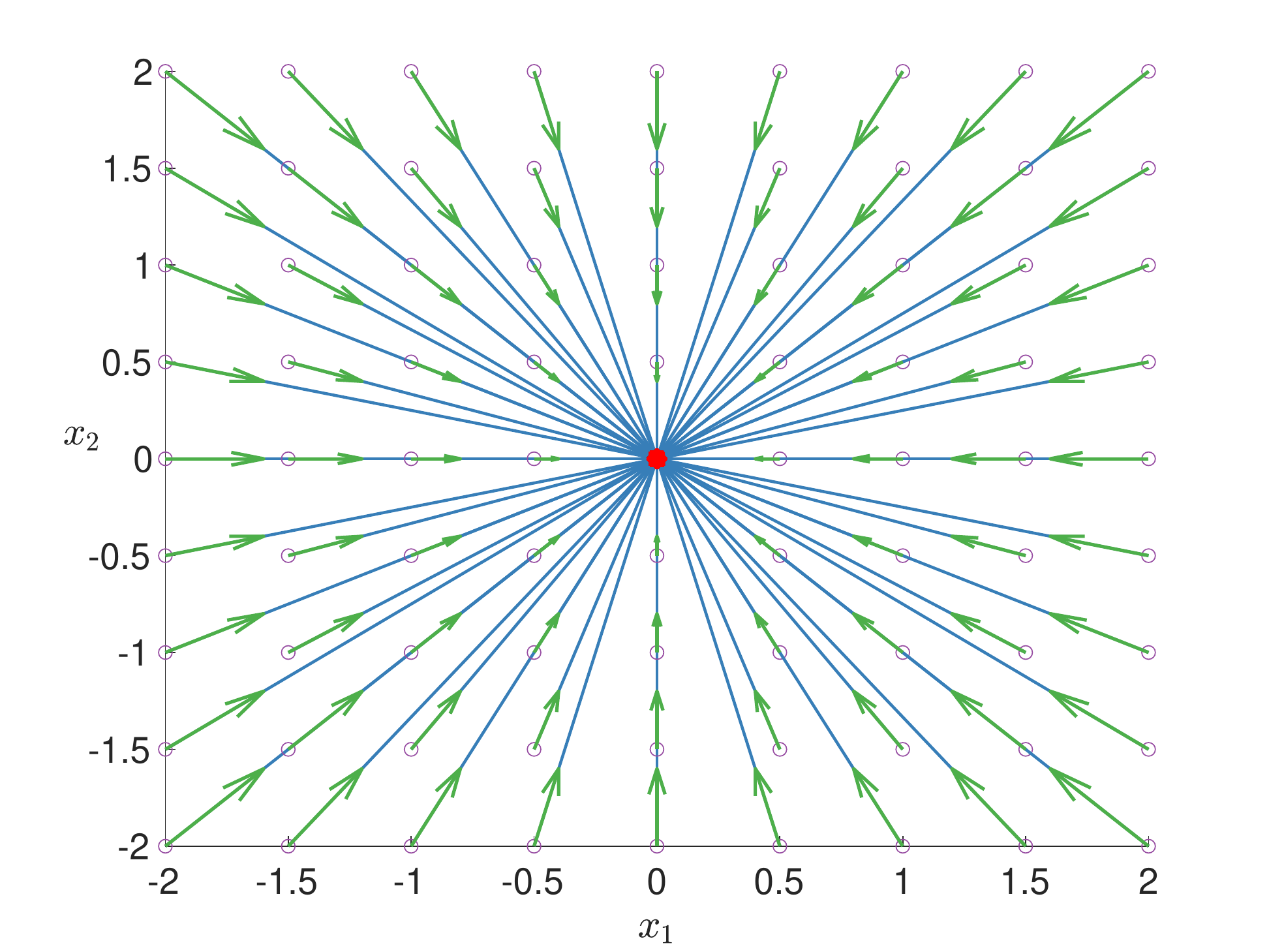}}
\caption{(a) Phase portrait of $T_1$ and (b) Phase portrait of $T_2$. The dynamical systems $T_1$ and $T_2$ are topologically conjugate via the homeomorphism $h$ as shown in Eq. \eqref{eq:homeomorphic_function}.}
\label{fig:phase_portraits_TC}
\end{figure}
The homeomorphic function between $T_1$ and $T_2$ is given by 
\begin{align}
    h(y) = \begin{bmatrix} y_1 \\ y_2 - y_1^2 \end{bmatrix}
    \label{eq:homeomorphic_function}
\end{align}

To demonstrate the application of proposed Koopman operator theoretic methods for topologically conjugate dynamical systems in Section \ref{sec:TC_systems}, we begin with the assumption of knowledge of system $T_2$ and the homeomorphism $h$ and eventually study the dynamical system $T_1$. 

\textbf{Data Generation:} Time-series data corresponding to $81$ initial conditions and for each initial condition, $21$ time-points are collected for system $T_2$. This time-series data is used to compute the finite dimensional approximate Koopman operator, ${\cal K}_{\Theta}$ using EDMD by choosing the dictionary function as 
\begin{align}
    \Theta(y) = \begin{bmatrix} y_1 \\ y_2 \\ y_1^2 \end{bmatrix}
    \label{eq:obsvs_psi}
\end{align}
Then the finite dimensional Koopman with respect to the chosen observables, $\Theta$ is given by $ {\cal K}_{\Theta}$ as shown in Fig. \ref{fig:K_Theta_tc}. 
\begin{figure}[h!]
    \centering
    \includegraphics[scale = 0.3]{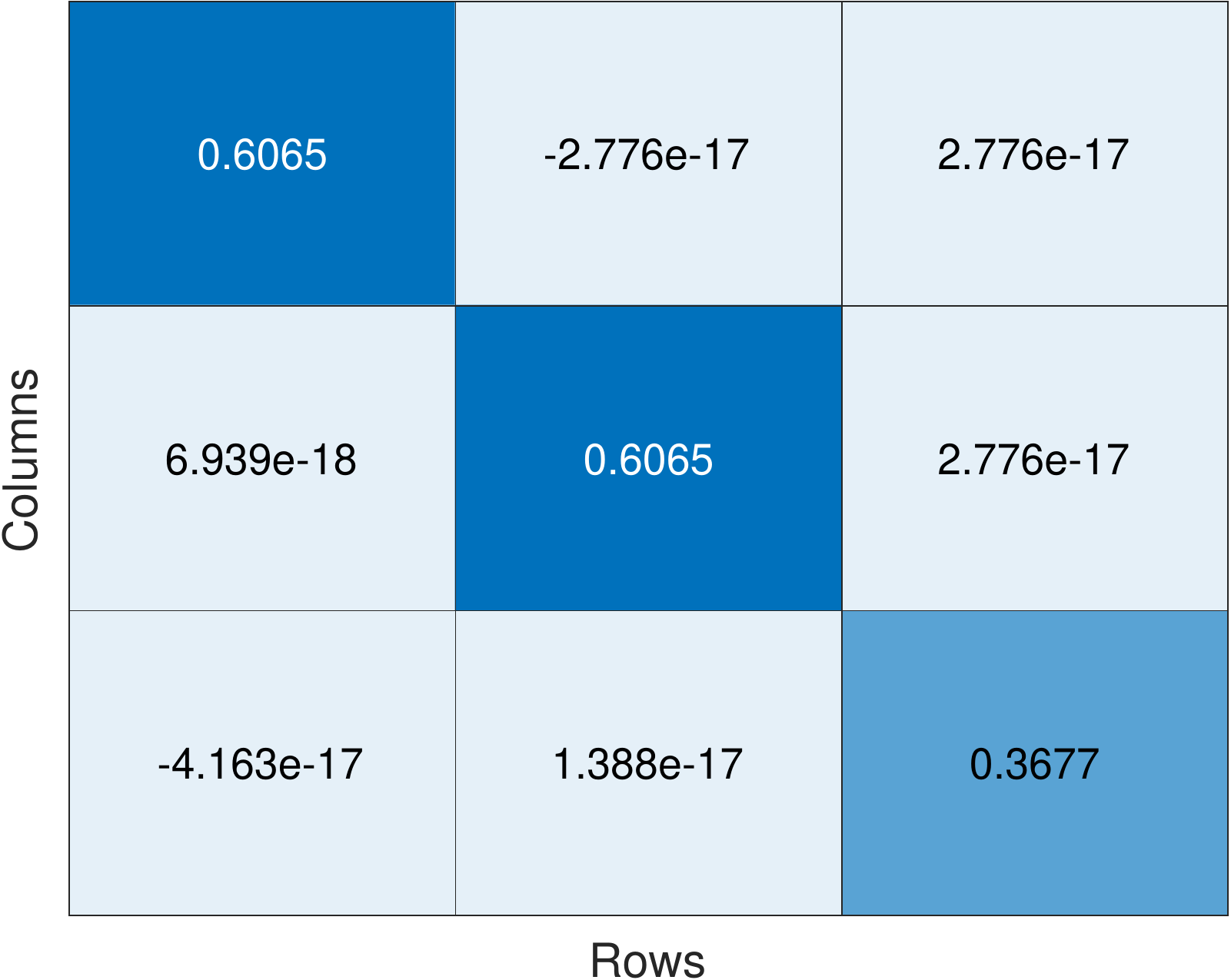}
    \caption{The matrix representation of the Koopman operator ${\cal K}_{\Theta}$ with the choice of observables $\Theta$ defined in Eq. \eqref{eq:obsvs_psi}.}
    \label{fig:K_Theta_tc}
\end{figure}

\textbf{Koopman Operator using Topological Conjugacy:} Once the dictionary functions for $T_2$ is fixed, the dictionary functions corresponding to the dynamical system $T_1$ is chosen as
$\Psi = \Theta \circ h^{-1}$, so that we have
\begin{align}
    \Psi(x) = \begin{bmatrix} x_1 \\ x_2+x_1^2 \\ x_1^2\end{bmatrix}
    \label{eq:obsvs_theta}
\end{align}

The time-series data corresponding to $T_1$ is obtained by the action of the homeomorphic function on the time-series data on $T_2$. Then the corresponding finite dimensional Koopman operator denoted by ${\cal K}_{\Psi}$ is shown in Fig. \ref{fig:K_Psi_tc}. 
\begin{figure}[h!]
    \centering
    \includegraphics[scale = 0.3]{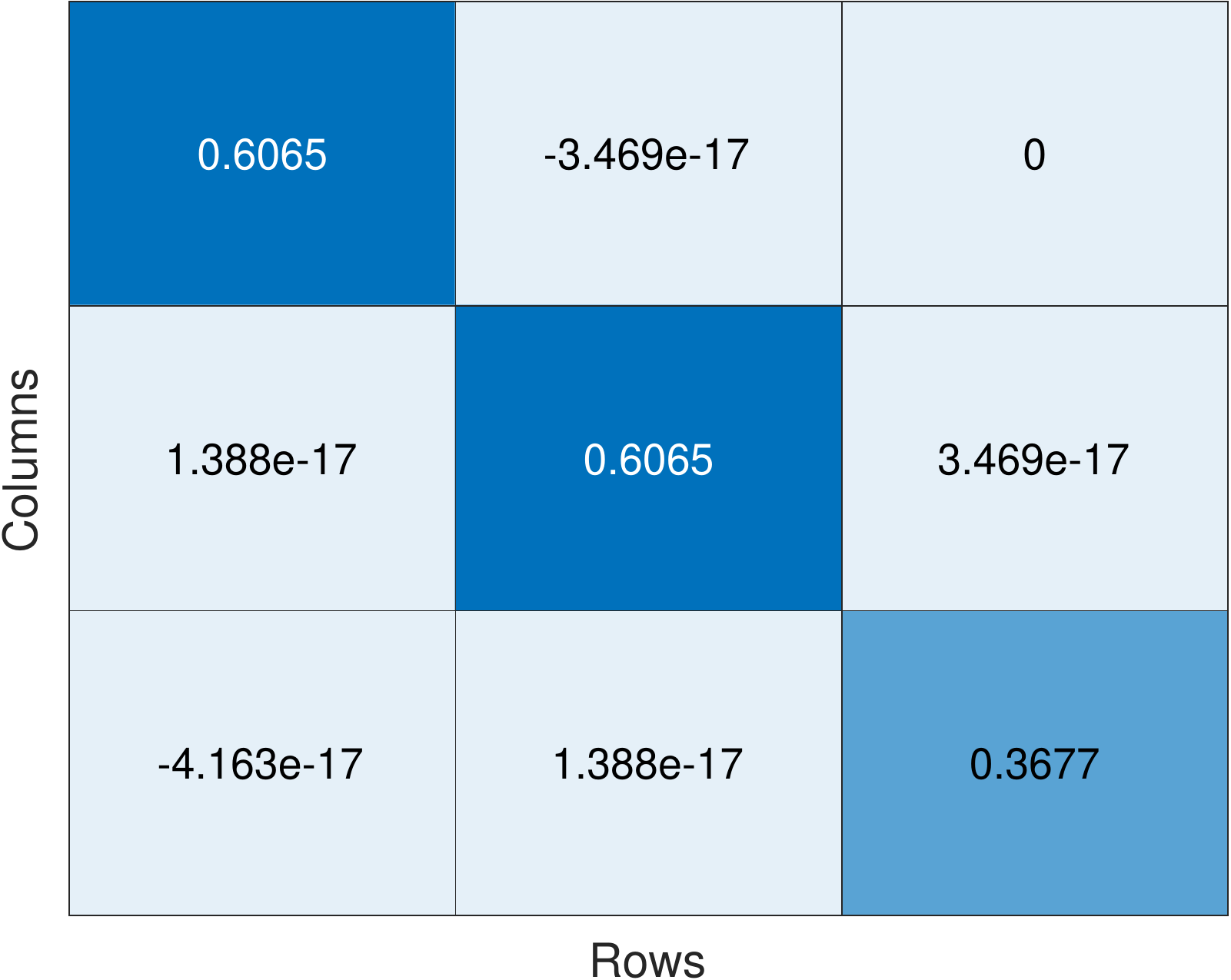}
    \caption{The matrix representation of the Koopman operator ${\cal K}_{\Psi}$ with the choice of observables $\Psi$ defined in Eq. \eqref{eq:obsvs_theta}.}
    \label{fig:K_Psi_tc}
\end{figure}
It is clear from Figs. \ref{fig:K_Theta_tc} and \ref{fig:K_Psi_tc} the Koopman operators, ${\cal K}_{\Theta}$ and ${\cal K}_{\Psi}$ are same and validates the result of Theorem \ref{thm:observable_functions_TC} on topological conjugate systems under the knowledge of the homeomorphic function.

%
%
\section{Conclusion}
\label{sec:conclusion}
Data-driven operator theoretic methods are developed for global phase analysis and learning of nonlinear dynamical systems. When new spatial time-series data arrives, we proposed and proved conditions on the residual error function under which the Koopman operator must be modified. We also proposed and proved  conditions for discovering new invariant subspaces in the state space of a dynamical system exclusively from the time-series data, based on the spectral properties of the Koopman operator. In contrast to phase space exploration results, when the local behavior in each invariant subspace is known, we developed a phase space stitching result by combining the local evolutions to predict global system dynamics. 
Next, we considered the scenario where full experimental measurements are not available, e.g., some portion of the phase space is difficult to measure or access. However, under the knowledge of symmetry relation between the invariant subspaces, that is the system is equivariant, we show the global model estimation is possible using the time-series data from only one of the invariant subspace. 
Finally, we show that global-to-local decompositions of the Koopman operator for a dynamical system can also exploit topological conjugacy, to simultaneously find model decompositions for topologically conjugate dynamical systems.  When applying our local-to-global estimation algorithms, we found we were able to reconstruct an approximation to the global Koopman operator for two example systems: a genetic toggle switch and nonlinear system with quadratic and bilinear terms. Furthermore, on topologically conjugate systems, we demonstrate that the phase space evolution of one system can be identified with the time-series data from the other system using the topological conjugacy. Future work will center on the development of software and recommender algorithms for active learning, based on the theory from this paper.

\section{Acknowledgments}
This work was supported partially by Defense Advanced Research Projects Agency (DARPA) Grants. DEAC0-576RL01830, HR0011-12-C-0065, and FA8750-19-2-0502, Institute of Collaborative Biotechnologies Grant, Army Research Office Young Investigator Program Grant W911NF-20-1-0165. Any opinions, findings and conclusions or recommendations expressed in this material are those of the author(s) and do not necessarily reflect the views of the DARPA, the Army Research Office, the Institute of Collaborative Biotechnologies, the Department of Defense, or the United States Government. 

\end{document}